\begin{document}

\title*{Harmonic and Trace Inequalities\\ in Lipschitz Domains\thanks{This 
is a preprint of a paper accepted 15-Nov-2018 and
whose final and definite form is a book chapter at Springer New York, 
on the topic of ``Frontiers in Functional Equations and Analytic Inequalities'',
Edited by G. Anastassiou and J. Rassias.}}
\titlerunning{Harmonic and Trace Inequalities in Lipschitz Domains} 

\author{Soumia Touhami, Abdellatif Chaira and Delfim F. M. Torres}
\authorrunning{S.~Touhami, A.~Chaira and D.~F.~M.~Torres} 

\institute{Soumia Touhami \at 
Universit\'{e} Moulay Ismail, Facult\'{e} des Sciences,\\ 
Laboratoire de Math\'{e}matiques et leures Applications,\\ 
Equipe EDP et Calcul Scientifique, B.P. 11201 Zitoune, 
50070 Mekn\`{e}s, Morocco\\ 
\email{touhami16soumia@gmail.com}
\and 
Abdellatif Chaira \at 
Universit\'{e} Moulay Ismail, Facult\'{e} des Sciences,\\ 
Laboratoire de Math\'{e}matiques et leures Applications,\\ 
Equipe EDP et Calcul Scientifique, B.P. 11201 Zitoune, 
50070 Mekn\`{e}s, Morocco\\ 
\email{a.chaira@fs.umi.ac.ma}
\and 
Delfim F. M. Torres (corresponding author)
\at Center for Research and Development in Mathematics and Applications (CIDMA),\\ 
Department of Mathematics, University of Aveiro, 3810-193 Aveiro, Portugal\\ 
\email{delfim@ua.pt}}

\maketitle


\abstract{We prove boundary inequalities 
in arbitrary bounded Lipschitz domains
on the trace space of Sobolev spaces. 
For that, we make use of the trace operator, its Moore--Penrose inverse, 
and of a special inner product. We show that our trace inequalities are 
particularly useful to prove harmonic inequalities, which serve
as powerful tools to characterize the harmonic functions  
on Sobolev spaces of non-integer order.}

\bigskip

\noindent \textbf{Keywords:} Moore--Penrose equality,
trace inequalities, harmonic inequalities,
Lipschitz domains, trace spaces, harmonic functions, Hilbert spaces.

\medskip

\noindent \textbf{2010 MSC:} 47A30; 47J20.


\section{Introduction}

In this article we establish some new and important operator 
inequalities connected with traces on Hilbert spaces.
Trace inequalities find several interesting applications,
e.g., to problems from quantum statistical mechanics 
and information theory \cite{MR2681769,MR3592581,MR3868111}.
Here we establish new trace inequalities in Lipschitz domains,
that is, in a domain of the Euclidean space whose boundary is 
``sufficiently regular'', in the sense that it can be thought of as, 
locally, being the graph of a Lipschitz continuous function \cite{MR3766365}. 
The study of Lipschitz domains is an important research area \emph{per se}, 
since many of the Sobolev embedding theorems require them as the natural domain 
of study \cite{MR3677864}. Consequently, many partial differential equations 
found in applications and variational problems are defined on Lipschitz domains 
\cite{MR3865109,MR3288348,MR3861903}. In our case, we investigate
the application of the obtained trace inequalities in Lipschitz domains
to harmonic functions \cite{MR1625845}, which is a subject of strong current research
\cite{MR3813463,MR2919427,MR0856473,MR3455304}.

The paper is organized as follows.
In Section~\ref{sec:02}, we fix notations
and recall necessary definitions and results,
needed in the sequel. Our contribution is then 
given in Section~\ref{sec:3}: we prove a
Moore--Penrose inverse equality (Theorem~\ref{prop:perm}),
trace inequalities (Theorem~\ref{thm:1}), and harmonic inequalities 
(Theorems~\ref{prop:1} and \ref{prop:3}). As
an application of Theorem~\ref{prop:3}, we 
obtain a functional characterization 
of the harmonic Hilbert spaces for the range of values 
$0\leq s\leq 1$ (Corollary~\ref{cor:spect:caract}).


\section{Preliminaries}
\label{sec:02}

Let $\mathcal H_1$ and $\mathcal H_2$ be two Hilbert spaces with inner products 
$(\cdot,\cdot)_{\mathcal H_1}$ and $(\cdot,\cdot)_{\mathcal H_2}$ and associated 
norms $\|\cdot\| _{\mathcal H_1}$ and $\|\cdot\| _{\mathcal H_2}$, respectively.  
We denote by $\mathcal L(\mathcal H_1, \mathcal H_2)$
the space of all linear operators from $\mathcal H_1$ 
into $\mathcal H_2$ and $\mathcal L(\mathcal H_1, \mathcal H_1)$ 
is briefly denoted by $\mathcal L(\mathcal H_1)$. For an operator 
$A\in \mathcal L(\mathcal H_1,\mathcal H_2)$, 
$\mathcal{D}(A)$, $\mathcal R(A)$ and $\mathcal N(A)$ 
denote its domain, its range, and its null space, respectively. 
The set of all bounded operators from $\mathcal H_1$ 
into $\mathcal H_2$ is denoted by $\mathcal B(\mathcal H_1, \mathcal H_2)$, 
while $\mathcal B(\mathcal H_1, \mathcal H_1)$ is briefly denoted 
by $\mathcal B(\mathcal H_1)$. The set of all closed densely defined 
operators from $\mathcal H_1$ into $\mathcal H_2$ is denoted 
by $\mathcal C(\mathcal H_1, \mathcal H_2)$ and, analogously as before,
$\mathcal C(\mathcal H_1, \mathcal H_1)$ is denoted by
$\mathcal C(\mathcal H_1)$. For $A \in \mathcal C(\mathcal H_1,\mathcal H_2)$, 
its adjoint operator is denoted by $A^*\in \mathcal C(\mathcal H_2,\mathcal H_1)$. 

The Moore--Penrose inverse of a closed densely defined operator 
$A \in {\mathcal C}({\mathcal H}_1, {\mathcal H}_2)$, 
denoted by $A^\dagger$, is defined as the unique linear operator 
in ${\mathcal C }({\mathcal H}_2, {\mathcal H}_1)$ such that
$$
{\mathcal D}(A^\dagger)={\mathcal R}(A)
\oplus {\mathcal N}(A^*), 
\quad  \mathcal N(A^{\dagger })= \mathcal N(A^*), 
$$ 
and 
\begin{equation*}
\begin{cases}
AA^\dagger A=A, &\text{  }  \\
A^\dagger AA^\dagger=A^\dagger, 
&\text{}
\end{cases}
\qquad
\begin{cases}
AA^\dagger \subset P_{\overline{{\mathcal R}(A)}}, 
& \text{  }  \\
A^\dagger A \subset P_{\overline{{\mathcal R}( A^\dagger)}}, 
&\text{}
\end{cases}
\end{equation*}
where $\overline{\mathcal{E}}$ denotes the closure of $\mathcal{E}$,
$\mathcal{E} \in \left\{{\mathcal R}(A), {\mathcal R}( A^\dagger) \right\}$,
and $P_{\overline{\mathcal{E}}}$ the orthogonal projection 
on the closed subspace $\overline{\mathcal{E}}$. 
The following lemma is used in the proof of our 
Moore--Penrose inverse equality (Theorem~\ref{prop:perm}).

\begin{lemma}[See Lemma~2.5 and Corollary~2.6 of \cite{L}]  
\label{eq:lemma2.5Cor2.6}
Let $A \in \mathcal C(\mathcal H_1, \mathcal H_2)$ 
and $B \in \mathcal C(\mathcal H_2,\mathcal H_1)$ be
such that $ B=A^{\dagger}$. Then,
\begin{enumerate}
\item $A(I+A^*A)^{-1} = B^*(I+BB^*)^{-1}$;
\item $(I+A^*A)^{-1}+(I+BB^*)^{-1}= I+P_{\mathcal N(B^*)}$;     
\item $A^*(I+AA^*)^{-1} = B(I+B^*B)^{-1}$;
\item $(I+AA^*)^{-1}+(I+B^*B)^{-1}= I+P_{\mathcal N(A^*)}$;
\item $(I+AA^*)^{-1}+(I+B^*B)^{-1}= I$  (if $A^*$ is injective);
\item $\mathcal N(A^*(I+AA^*)^{-1/2}) =\mathcal N(A^*) = \mathcal N(B)$.
\end{enumerate}
\end{lemma}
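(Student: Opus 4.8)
The plan is to reduce the six assertions to two genuinely independent ones, namely items~(1) and~(4), and to derive all the others from these by a duality argument together with the defining properties of the Moore--Penrose inverse recorded in the excerpt. Before starting I would fix two standard facts. First, by von Neumann's theorem the operators $I+A^*A$ and $I+AA^*$ are self-adjoint, positive and boundedly invertible, so that $A(I+A^*A)^{-1}$ and $A^*(I+AA^*)^{-1}$ are bounded and everywhere defined. Second, I would use the identity $(A^\dagger)^*=(A^*)^\dagger$, i.e. $B^*=(A^*)^\dagger$, which allows me to pass from the pair $(A,B)$ to the pair $(A^*,B^*)$; combined with $A^{**}=A$ this yields $\mathcal N(B)=\mathcal N(A^*)$ and $\mathcal N(B^*)=\mathcal N(A)$. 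Throughout I would work in the orthogonal decompositions $\mathcal H_1=\mathcal N(A)\oplus\overline{\mathcal R(A^*)}$ and $\mathcal H_2=\mathcal N(A^*)\oplus\overline{\mathcal R(A)}$ induced by the closed operator $A$.

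For item~(1) I would pass to the injective part. Write $A_0$ for the restriction of $A$ to $\overline{\mathcal R(A^*)}$, a closed densely defined injective operator with dense range onto $\overline{\mathcal R(A)}$, and observe that $B$ restricts on $\overline{\mathcal R(A)}$ to $B_0=A_0^{-1}$ and vanishes on $\mathcal N(A^*)$, while $B^*$ vanishes on $\mathcal N(A)$. Then $B_0^*=(A_0^*)^{-1}$ and $BB^*=(A_0^*A_0)^{-1}$ on $\overline{\mathcal R(A^*)}$, so that $I+BB^*=I+(A_0^*A_0)^{-1}$ inverts to $A_0^*A_0(I+A_0^*A_0)^{-1}$, whence
\[
B^*(I+BB^*)^{-1}=(A_0^*)^{-1}\,A_0^*A_0\,(I+A_0^*A_0)^{-1}=A_0(I+A_0^*A_0)^{-1}.
\]
This equals $A(I+A^*A)^{-1}$ because both sides annihilate $\mathcal N(A)$ and agree with $A_0(I+A_0^*A_0)^{-1}$ on $\overline{\mathcal R(A^*)}$. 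Item~(3) then follows immediately by applying item~(1) to $(\tilde A,\tilde B)=(A^*,B^*)$, since $\tilde A^\dagger=B^*$ and the substitution $\tilde A^*\tilde A=AA^*$, $\tilde B\tilde B^*=B^*B$ turns the identity of~(1) into that of~(3).

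Item~(4) I would obtain by the same splitting, evaluating each summand on the two pieces of $\mathcal H_2$. On $\overline{\mathcal R(A)}$ one has $(I+AA^*)^{-1}=(I+A_0A_0^*)^{-1}$ and, since $B^*B=(A_0A_0^*)^{-1}$ there, $(I+B^*B)^{-1}=A_0A_0^*(I+A_0A_0^*)^{-1}$, so the sum telescopes to $I$; on $\mathcal N(A^*)$ both resolvents reduce to $I$ (using $\mathcal N(B)=\mathcal N(A^*)$), so the sum is $2I$. Patching the two pieces gives exactly $I+P_{\mathcal N(A^*)}$. Item~(2) follows by applying item~(4) to $(A^*,B^*)$ and using $\mathcal N(A)=\mathcal N(B^*)$, and item~(5) is the special case $\mathcal N(A^*)=\{0\}$. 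Finally, in item~(6) the equality $\mathcal N(A^*)=\mathcal N(B)$ is the defining null-space property of $A^\dagger$, while $\mathcal N(A^*(I+AA^*)^{-1/2})=\mathcal N(A^*)$ holds because $(I+AA^*)^{-1/2}$ is injective, reduces the decomposition of $\mathcal H_2$, and acts as the identity on $\mathcal N(A^*)$, so it maps a vector into $\mathcal N(A^*)$ precisely when the vector already lies there.

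The algebra is routine; the main obstacle is making the inversions in the displayed line rigorous for unbounded $A$. Concretely, I must justify that $BB^*=(A_0^*A_0)^{-1}$ holds with the correct domains and that $(I+BB^*)^{-1}=A_0^*A_0(I+A_0^*A_0)^{-1}$, which I would settle through the spectral calculus of the positive self-adjoint operator $A_0^*A_0$ together with careful bookkeeping of the domain of the unbounded inverse $A_0^{-1}=B_0$. The delicate point on which the whole scheme turns is verifying that the reduction to $A_0$ is faithful, namely that the zero actions on $\mathcal N(A)$ and $\mathcal N(A^*)$ patch correctly with the bounded, everywhere-defined expressions on $\overline{\mathcal R(A^*)}$ and $\overline{\mathcal R(A)}$ to produce the stated global identities.
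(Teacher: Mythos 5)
The paper does not prove this lemma at all: it is imported verbatim from Labrousse \cite{L} (Lemma~2.5 and Corollary~2.6 there), so there is no internal proof to compare against, and your argument has to be judged on its own. It is correct. The block reduction $A=0\oplus A_0$, $B=0\oplus A_0^{-1}$ relative to $\mathcal H_1=\mathcal N(A)\oplus\overline{\mathcal R(A^*)}$ and $\mathcal H_2=\mathcal N(A^*)\oplus\overline{\mathcal R(A)}$ is legitimate (note the domain splitting is immediate, since $\mathcal N(A)\subset\mathcal D(A)$ by definition), the identities $BB^*=(A_0^*A_0)^{-1}$ and $(I+T^{-1})^{-1}=T(I+T)^{-1}$ for the positive injective self-adjoint $T=A_0^*A_0$ are exactly the right way to make the displayed cancellation rigorous, and the dualization via $B^*=(A^*)^\dagger$, $A^{**}=A$ correctly yields (3) from (1) and (2) from (4); items (5) and (6) are then routine. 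Two remarks. First, the ingredients you take as ``observations'' --- that $B$ acts as $A_0^{-1}$ on $\mathcal R(A)$ and vanishes on $\mathcal N(A^*)$, that $(A^\dagger)^*=(A^*)^\dagger$, and that $(A_0)^*$ is the injective part of $A^*$ --- are themselves among the structural facts established in \cite{L}, so your text is a reconstruction resting on that structure rather than a proof from the bare axioms; if you want it self-contained you should derive these from the listed defining relations (e.g.\ $ABAu=Au$ plus $A^\dagger A\subset P_{\overline{\mathcal R(A^\dagger)}}$ gives the action of $B$ on $\mathcal R(A)$). Second, the classical route in the literature obtains (1)--(5) by comparing the orthogonal projections onto the graphs of $A$ and of $B$, whose operator blocks are precisely $(I+A^*A)^{-1}$, $A(I+A^*A)^{-1}$, $A^*(I+AA^*)^{-1}$, etc.; that calculus gives the same identities with the null-space projections appearing automatically, whereas your spectral/reduction argument is more hands-on but requires the domain bookkeeping you rightly flag as the delicate point.
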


\begin{lemma}[See Theorem~3.5 of \cite{TCT}] 
\label{thm:3.5}
Let ${\mathcal H}_1$ and ${\mathcal H}_2$ be two Hilbert spaces, 
$A \in {\mathcal B}({\mathcal H}_1, {\mathcal H}_2)$, and 
$B$ be its Moore--Penrose inverse. Then, the operator $B^*(I+BB^*)^{-1/2}$ 
is bounded with closed range and has a bounded Moore--Penrose inverse given by 
\begin{equation*}
T_B=B(I+B^*B)^{-1/2}+A^*(I+B^*B)^{-1/2}.
\end{equation*}
Moreover, the adjoint operator of $T_B$ is $T_{B^*}$, where 
\begin{equation*}
T_{B^*}=B^*(I+BB^*)^{-1/2}+A(I+BB^*)^{-1/2}.
\end{equation*}
\end{lemma}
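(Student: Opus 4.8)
The plan is to treat $S := B^*(I+BB^*)^{-1/2}$ as the \emph{bounded transform} of the closed densely defined operator $B^*\in\mathcal C(\mathcal H_1,\mathcal H_2)$ and to exhibit its Moore--Penrose inverse explicitly. First I would record boundedness: for any closed densely defined $C$ the transform $C(I+C^*C)^{-1/2}$ is everywhere defined with norm at most one, so taking $C=B^*$ (whence $C^*=B$ and $C^*C=BB^*$) makes $S$ a contraction; likewise $B(I+B^*B)^{-1/2}$ is a contraction and $A^*(I+B^*B)^{-1/2}$ is bounded because $A^*$ is bounded, so $T_B$ is bounded. The guiding principle is that a bounded operator has closed range precisely when its Moore--Penrose inverse is bounded; hence it suffices to produce a bounded operator fulfilling the four defining relations with $S$, after which the uniqueness of the Moore--Penrose inverse delivers both the closed range of $S$ and the identity $S^{\dagger}=T_B$ at once.

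The verification engine is the functional-calculus intertwining $B^*g(BB^*)=g(B^*B)B^*$ together with its adjoint $g(BB^*)B=Bg(B^*B)$, valid for bounded Borel $g$. With $g(t)=(1+t)^{-1/2}$ this gives $S^{*}=B(I+B^*B)^{-1/2}=(I+BB^*)^{-1/2}B$, so the first summand of $T_B$ is exactly $S^{*}$; with $g(t)=(1+t)^{-1}$ it yields the clean identities
\begin{equation*}
SS^{*}=I-(I+B^*B)^{-1},\qquad S^{*}S=I-(I+BB^*)^{-1}.
\end{equation*}
Next I would identify the relevant subspaces: the symmetrised form of Lemma~\ref{eq:lemma2.5Cor2.6}(6) gives $\mathcal N(S)=\mathcal N(B^{*})=\mathcal N(A)$, while $\mathcal R(S)=\mathcal R(B^{*})$ has closure $\overline{\mathcal R(A)}$. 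Writing $T_B=S^{*}+A^{*}(I+B^*B)^{-1/2}$, I would then compute $S T_B=SS^{*}+SA^{*}(I+B^*B)^{-1/2}$ and, symmetrically, $T_B S$, and show that these coincide with the orthogonal projections $P_{\overline{\mathcal R(A)}}$ and $P_{\mathcal N(A)^{\perp}}$. Feeding these back, the relations $S T_B S=S$ and $T_B S T_B=T_B$ follow, and the self-adjointness of the two projections supplies the remaining Penrose conditions; uniqueness then gives $T_B=S^{\dagger}$ and the closed range of $S$.

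The main obstacle is the cross term $SA^{*}(I+B^*B)^{-1/2}=B^{*}(I+BB^{*})^{-1/2}A^{*}(I+B^*B)^{-1/2}$: unlike $SS^{*}$, it mixes the calculus of $BB^{*}$ on $\mathcal H_1$ with that of $B^{*}B$ on $\mathcal H_2$ and brings in $A^{*}$, so it cannot be collapsed by the intertwining alone. Here I would invoke the resolvent identities of Lemma~\ref{eq:lemma2.5Cor2.6}, principally $A^{*}(I+AA^{*})^{-1}=B(I+B^*B)^{-1}$ together with $(I+AA^{*})^{-1}+(I+B^*B)^{-1}=I+P_{\mathcal N(A^{*})}$, passing from the $-1$ powers to the $-1/2$ powers by uniqueness of the positive square root and carefully tracking the kernel corrections $P_{\mathcal N(A^{*})}$ and $P_{\mathcal N(B^{*})}$; the attendant domain bookkeeping for the unbounded $B$ is tamed by the fact that every bounded transform occurring is everywhere defined.

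Finally, for the adjoint I would compute $T_B^{*}$ termwise: $\bigl(B(I+B^*B)^{-1/2}\bigr)^{*}=S^{**}=S=B^{*}(I+BB^*)^{-1/2}$ supplies the first summand of $T_{B^{*}}$, and $\bigl(A^{*}(I+B^*B)^{-1/2}\bigr)^{*}=(I+B^*B)^{-1/2}A$. It then remains to establish the intertwining $(I+B^*B)^{-1/2}A=A(I+BB^*)^{-1/2}$ --- equivalently $A(BB^{*})\subset(B^{*}B)A$, expressing that $A$ carries the spectral calculus of $BB^{*}$ on $\mathcal H_1$ to that of $B^{*}B$ on $\mathcal H_2$ --- which produces the second summand $A(I+BB^*)^{-1/2}$ and hence $T_B^{*}=T_{B^{*}}$.
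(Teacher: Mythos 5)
First, a point of comparison: the paper itself contains no proof of this lemma --- it is imported verbatim from the companion paper \cite{TCT} (Theorem~3.5 there) and used as a black box, so there is no in-paper argument to measure your proposal against. Judged on its own terms, your architecture is sound and your target identities are all correct: $S:=B^*(I+BB^*)^{-1/2}$ is the bounded transform of the closed operator $B^*$, hence a contraction with $S^*=B(I+B^*B)^{-1/2}$, $\mathcal N(S)=\mathcal N(B^*)=\mathcal N(A)$ and $\mathcal R(S)=\mathcal R(B^*)=\mathcal N(A^*)^{\perp}=\overline{\mathcal R(A)}$; exhibiting a bounded $T_B$ satisfying the four Penrose relations does yield closed range and $S^{\dagger}=T_B$ by uniqueness; and $ST_B$, $T_BS$ really are $P_{\overline{\mathcal R(A)}}$ and $P_{\mathcal N(A)^{\perp}}$. (All of this can be checked on the singular decomposition $Av_k=s_kz_k$, $Bz_k=s_k^{-1}v_k$, where $Sv_k=(1+s_k^2)^{-1/2}z_k$ and $T_Bz_k=\sqrt{1+s_k^2}\,v_k$.)

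The gap sits exactly where you flag the ``main obstacle'': the mechanism you propose for the cross term does not work as stated. You cannot pass from Lemma~\ref{eq:lemma2.5Cor2.6}(3), $A^*(I+AA^*)^{-1}=B(I+B^*B)^{-1}$, to a $-1/2$-power analogue ``by uniqueness of the positive square root'': neither side is a positive operator, and the half-power identity is in fact \emph{false} --- on the $k$-th singular subspace $A^*(I+AA^*)^{-1/2}$ acts with coefficient $s_k(1+s_k^2)^{-1/2}$ while $B(I+B^*B)^{-1/2}$ acts with coefficient $(1+s_k^2)^{-1/2}$, so the two sides differ by the factor $s_k$. What is actually needed is the identity $SA^*(I+B^*B)^{-1/2}=I-(I+AA^*)^{-1}$, which together with $SS^*=I-(I+B^*B)^{-1}$ and Lemma~\ref{eq:lemma2.5Cor2.6}(4) gives $ST_B=I-P_{\mathcal N(A^*)}=P_{\overline{\mathcal R(A)}}$. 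To prove it you must first commute $A^*$ past $(I+B^*B)^{-1/2}$; one legitimate route is to observe from Lemma~\ref{eq:lemma2.5Cor2.6}(4) that on $\mathcal N(A^*)^{\perp}$ the operator $(I+B^*B)^{-1}$ coincides with $AA^*(I+AA^*)^{-1}$, i.e.\ is a Borel function of $AA^*$, after which the standard intertwining $A^*g(AA^*)=g(A^*A)A^*$ applies; the same device establishes the relation $(I+B^*B)^{-1/2}A=A(I+BB^*)^{-1/2}$ that you correctly leave open as the last step toward $T_B^*=T_{B^*}$. In short, the skeleton is right and the conclusion is true, but the one concrete tool you offer for the hardest step is invalid and must be replaced.
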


\begin{lemma}[See Theorem~3.8 of \cite{TCT}] 
\label{lemme:1}
Let $\mathcal H_1$ and $\mathcal H_2$ be two Hilbert spaces, 
$A\in \mathcal B(\mathcal H_1,\mathcal H_2)$, and $B$ be 
its Moore--Penrose inverse. Then, the decomposition
$$
A= (I+B^*B)^{-1/2} T_{B^*}
$$
holds, where $T_{B^*}=B^*(I+BB^*)^{-1/2}+A(I+BB^*)^{-1/2}$.
\end{lemma}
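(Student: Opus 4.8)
The plan is to compute the product $(I+B^*B)^{-1/2}T_{B^*}$ directly and show that it collapses to $A$. Substituting the expression for $T_{B^*}$ from Lemma~\ref{thm:3.5} and distributing the bounded factor $(I+B^*B)^{-1/2}$, I would split the product into
\begin{equation*}
(I+B^*B)^{-1/2}T_{B^*}=(I+B^*B)^{-1/2}B^*(I+BB^*)^{-1/2}+(I+B^*B)^{-1/2}A(I+BB^*)^{-1/2},
\end{equation*}
and treat the two summands separately.

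For the first summand I would invoke the standard intertwining relation $B^*(I+BB^*)^{-1/2}=(I+B^*B)^{-1/2}B^*$, which holds for any closed densely defined $B$ because $B^*(I+BB^*)=(I+B^*B)B^*$ and the square root is obtained by functional calculus. This turns the first summand into $B^*(I+BB^*)^{-1}$, which by item~1 of Lemma~\ref{eq:lemma2.5Cor2.6} equals $A(I+A^*A)^{-1}$.

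For the second summand I would first establish the companion intertwining $A(I+BB^*)^{-1/2}=(I+B^*B)^{-1/2}A$. Since $B=A^\dagger$, one has $AB=AA^\dagger=P_{\overline{\mathcal R(A)}}$ and $\mathcal R(B^*)=\mathcal R(A)$, so that $ABB^*=B^*=B^*BA$; hence $A(I+BB^*)=(I+B^*B)A$ and the relation for the square roots follows. The second summand then reduces to $(I+B^*B)^{-1}A$. Using item~4 of Lemma~\ref{eq:lemma2.5Cor2.6}, namely $(I+AA^*)^{-1}+(I+B^*B)^{-1}=I+P_{\mathcal N(A^*)}$, together with $P_{\mathcal N(A^*)}A=0$ (because $\mathcal R(A)\perp\mathcal N(A^*)$) and the intertwining $(I+AA^*)^{-1}A=A(I+A^*A)^{-1}$, I would rewrite it as $A-A(I+A^*A)^{-1}$. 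Adding the two summands, the terms $A(I+A^*A)^{-1}$ cancel and the total is exactly $A$, proving the decomposition.

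The step I expect to be the main obstacle is the rigorous justification of the intertwining identity $A(I+BB^*)^{-1/2}=(I+B^*B)^{-1/2}A$ in the case where $A$ lacks closed range and $B=A^\dagger$ is unbounded: then $BB^*$ and $B^*B$ are unbounded, the identity $ABB^*=B^*BA$ must be read on the dense domain $\mathcal D(BB^*)$, and passing to the bounded resolvents and their square roots requires a careful functional-calculus argument. Once the two intertwining relations and the two items of Lemma~\ref{eq:lemma2.5Cor2.6} are in hand, the remaining manipulations are purely algebraic.
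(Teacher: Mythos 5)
The paper itself gives no proof of this lemma: it is imported verbatim from Theorem~3.8 of \cite{TCT}, so there is no in-paper argument to compare yours against; judged on its own merits, your direct computation is correct. The two intertwining relations do hold in the form you use them: since $\mathcal N(B)=\mathcal N(A^*)$ and $A^\dagger A=P_{\mathcal N(A)^\perp}$ (everywhere defined, because $\mathcal R(A)\subseteq\mathcal D(A^\dagger)$), one gets $ABB^*x=B^*x=B^*BAx$ for $x\in\mathcal D(BB^*)$, using also $\mathcal N(B^*)=\overline{\mathcal R(B)}^{\perp}=\mathcal N(A)$; this yields $A(I+BB^*)^{-1}=(I+B^*B)^{-1}A$, and the same identity with $I$ replaced by $\lambda+I$, $\lambda\geq 0$, combined with the representation $(I+S)^{-1/2}=\frac{1}{\pi}\int_0^{\infty}\lambda^{-1/2}(\lambda+I+S)^{-1}\,d\lambda$, gives the square-root intertwining, so the obstacle you flag for unbounded $B$ is genuine but entirely standard. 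Two small corrections to your justification: $AA^\dagger$ is only a restriction of $P_{\overline{\mathcal R(A)}}$ (equality requires $\mathcal R(A)$ closed), and $\mathcal R(B^*)=\mathcal R(A)$ is false in general; what is true, and all your argument actually needs, is $\mathcal R(B^*)\subseteq\overline{\mathcal R(B^*)}=\mathcal N(B)^{\perp}=\overline{\mathcal R(A)}$, so that $P_{\overline{\mathcal R(A)}}B^*=B^*$. With these repairs, the final cancellation $A(I+A^*A)^{-1}+\bigl(A-A(I+A^*A)^{-1}\bigr)=A$, obtained from items~1 and~4 of Lemma~\ref{eq:lemma2.5Cor2.6} together with $P_{\mathcal N(A^*)}A=0$ and $(I+AA^*)^{-1}A=A(I+A^*A)^{-1}$, is exactly as you state, and the decomposition $A=(I+B^*B)^{-1/2}T_{B^*}$ follows.
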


\begin{lemma} [See Corollary~3.7 of \cite{TCT}] 
\label{lem:2}
Let $A\in\mathcal B(\mathcal H_1,\mathcal H_2)$ and $B$ be 
its Moore--Penrose inverse. Then, $T_B$ is an isomorphism from 
$\mathcal{R}(B^*)$ to $\mathcal{N}(B^*)^{\perp}$, where
$\mathcal{N}(B^*)^{\perp}$ denotes the orthogonal 
complement of $\mathcal{N}(B^*)$.
\end{lemma}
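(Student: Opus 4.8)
The plan is to realise $T_B$ as the Moore--Penrose inverse of a single bounded operator with closed range and then to read off the isomorphism from the general structure of such inverses. Put $S := B^*(I+BB^*)^{-1/2}$. By Lemma~\ref{thm:3.5}, $S$ is bounded, has closed range, and satisfies $T_B = S^\dagger$. For any bounded operator with closed range one has $S^\dagger S = P_{\mathcal N(S)^\perp}$, $S S^\dagger = P_{\mathcal R(S)}$ and $\mathcal R(S^\dagger) = \mathcal N(S)^\perp$; hence $S$ restricts to a bounded bijection $\mathcal N(S)^\perp \to \mathcal R(S)$ whose inverse is exactly $S^\dagger\big|_{\mathcal R(S)}$. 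By the open mapping theorem this restriction is an isomorphism of $\mathcal R(S)$ onto $\mathcal N(S)^\perp$. It therefore remains only to identify $\mathcal R(S)$ with $\mathcal R(B^*)$ and $\mathcal N(S)$ with $\mathcal N(B^*)$.

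The kernel is immediate. Applying item~(6) of Lemma~\ref{eq:lemma2.5Cor2.6} with $A$ replaced by $B$ (so that the Moore--Penrose inverse appearing there is $B^\dagger=A$) gives $\mathcal N(S)=\mathcal N\big(B^*(I+BB^*)^{-1/2}\big)=\mathcal N(B^*)$, whence $\mathcal N(S)^\perp=\mathcal N(B^*)^\perp$, the desired codomain.

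For the range I would pass to the adjoint. Since $(I+BB^*)^{-1/2}$ is bounded and self-adjoint, $S^*=(I+BB^*)^{-1/2}B$, and the injectivity of $(I+BB^*)^{-1/2}$ yields $\mathcal N(S^*)=\mathcal N(B)$. Because $S$ has closed range, $\mathcal R(S)=\overline{\mathcal R(S)}=\mathcal N(S^*)^\perp=\mathcal N(B)^\perp$. It then suffices to check that $\mathcal R(B^*)$ is closed, for then $\mathcal R(B^*)=\mathcal N(B)^\perp=\mathcal R(S)$. This closedness holds because $B=A^\dagger$ has closed range: as $A$ is bounded, $A^\dagger A=P_{\mathcal N(A)^\perp}$ on all of $\mathcal H_1$, so $\mathcal R(A^\dagger)\supseteq\mathcal N(A)^\perp$, while $\overline{\mathcal R(A^\dagger)}=\mathcal N(A)^\perp$; thus $\mathcal R(B)=\mathcal N(A)^\perp$ is closed and, by the closed range theorem, so is $\mathcal R(B^*)$, with $\mathcal R(B^*)=\mathcal N(B)^\perp$.

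The main obstacle is this last range identification: the decomposition of $T_B$ handed to us by Lemma~\ref{thm:3.5} controls kernels cleanly but not ranges, so one must argue the closedness of $\mathcal R(B^*)$ separately. In the favourable case where $A$ has closed range, $B$ is bounded and $(I+BB^*)^{-1/2}$ is a bounded bijection of $\mathcal H_1$, so that $\mathcal R(S)=\mathcal R(B^*)$ and $\mathcal N(S)=\mathcal N(B^*)$ follow at once and the argument shortens considerably.
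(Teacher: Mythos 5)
You should first note that the paper does not prove this lemma at all: it is quoted from Corollary~3.7 of \cite{TCT}, so there is no internal proof to compare against, and your argument has to be judged on its own. On its own terms it is essentially correct and is a natural way to derive the statement from the two quoted lemmas: you realise $T_B$ as the Moore--Penrose inverse of $S=B^*(I+BB^*)^{-1/2}$ via Lemma~\ref{thm:3.5}, use the standard facts $S^\dagger S=P_{\mathcal N(S)^\perp}$, $SS^\dagger=P_{\mathcal R(S)}$, $\mathcal R(S^\dagger)=\mathcal N(S)^\perp$ for a bounded operator with closed range, and then identify kernel and range; the kernel identification $\mathcal N(S)=\mathcal N(B^*)$ via item~(6) of Lemma~\ref{eq:lemma2.5Cor2.6}, with the roles of $A$ and $B$ exchanged, is legitimate because $(A^\dagger)^\dagger=A$ for closed densely defined operators.

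One technical point needs repair, though it does not sink the proof. When $\mathcal R(A)$ is not closed, $B=A^\dagger$ is unbounded, and the formula $S^*=(I+BB^*)^{-1/2}B$ you use is not an equality of operators: the right-hand side is defined only on $\mathcal D(B)$, whereas $S^*$ is everywhere defined; the correct bounded expression is $S^*=B(I+B^*B)^{-1/2}$, the unique continuous extension of $(I+BB^*)^{-1/2}B$. Your conclusion $\mathcal N(S^*)=\mathcal N(B)$ remains true, but it should be justified either by item~(6) of Lemma~\ref{eq:lemma2.5Cor2.6} applied to the pair $(B^*,A^*)$, or by noting that $(I+B^*B)^{-1/2}$ fixes $\mathcal N(B)$ pointwise. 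Alternatively you can bypass the adjoint and the closed range theorem entirely: since $\mathcal R\bigl((I+BB^*)^{-1/2}\bigr)=\mathcal D\bigl((I+BB^*)^{1/2}\bigr)=\mathcal D(B^*)$, one gets directly $\mathcal R(S)=B^*\bigl(\mathcal D(B^*)\bigr)=\mathcal R(B^*)$, and the closedness of $\mathcal R(B^*)$ then comes for free from the closedness of $\mathcal R(S)$ asserted in Lemma~\ref{thm:3.5}. With either repair your argument is complete.
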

 
Let $\Omega$ be an open subset of $\mathbb R^d$ with boundary 
$\partial \Omega$ and closure $\overline{\Omega}$. 
We say that $\partial \Omega$ is Lipschitz continuous 
if for every $x\in \partial \Omega$ there exists a coordinate system 
$(\widehat{y}, y_d)\in \mathbb{R}^{d-1}\times\mathbb{R}$, 
a neighborhood $Q_{\delta,\delta'}(x)$ of $x$, and a Lipschitz function 
$\gamma_x:\widehat{Q}_{\delta} \rightarrow \mathbb{R}$, with 
the following properties:
\begin{enumerate}
\item $\Omega \cap Q_{\delta,\delta'}(x) 
= \left\{(\widehat{y},y_d) \in Q_{\delta,\delta'}(x) 
\ / \ \gamma_x(\widehat{x}) < y_{d} \right\}$;
  
\item $\partial \Omega \cap  Q_{\delta,\delta'}(x) 
=\left\{  (\widehat{y},y_d) \in Q_{\delta,\delta'}(x) 
\ / \ \gamma_x(\widehat{x}) = y_{d} \right\}$;
\end{enumerate}
where 
$Q_{\delta,\delta'}(x) 
= \left\{  (\widehat{y},y_d) \in \mathbb R^d \ / \  
\|\widehat{y}-\widehat{x}\| _{\mathbb R^{d-1}} < \delta  
\ \ \text{and} \ \ |y_d - x_d | < \delta' \right\}$ 
and 
$$ 
\widehat{Q}_{\delta}(x) = \left\{ \widehat{y} \in \mathbb R^{d-1} 
\ / \  \|\widehat{y}-\widehat{x}\| _{\mathbb R^{d-1}} < \delta  \right\} 
$$
for  $\delta, \delta' > 0$. 
An open connected subset $\Omega \subset \mathbb{R}^d$, 
whose boundary is Lipschitz continuous, 
is called a Lipschitz domain.
In the rest of this paper, $\Omega$ denotes a bounded Lipschitz 
domain in $\mathbb{R}^d$, $d\geq 2$. We denote by 
$\mathcal C^k(\Omega)$, $k\in \mathbb{N}$ 
or $k= \infty$, the space of real $k$ times continuously 
differentiable functions on $\Omega$. The space $\mathcal{C}^{\infty}$
of all real functions on $\Omega$ with a compact support 
in $\Omega$ is denoted by $\mathcal C_c^{\infty} (\Omega)$.
We say that a sequence $(\varphi_n)_{n\geq 1}\in \mathcal C_c^{\infty}(\Omega)$  
converges to $\varphi \in \mathcal C_c^{\infty}(\Omega)$, 
if there exists a compact $Q\subset \Omega$ such that  
$\mathrm{supp}(\varphi_n) \subset Q$ for all $n\geq 1$ 
and, for all multi-index $\alpha \in \mathbb{N}^d$, 
the sequence $(\partial^{\alpha} \varphi_n)_{n\geq 1}$ 
converges uniformly to $\partial ^{\alpha} \varphi$, 
where $\partial^{\alpha}$ denotes the partial derivative of order $\alpha$. 
The space $\mathcal C_c^\infty(\Omega)$, induced by this convergence, 
is denoted by $\mathscr{D}(\Omega)$,
while $\mathscr{D}^{\prime}(\Omega)$ is
the space of distributions on $\Omega$.
For $k\in \mathbb N$, $H^k(\Omega)$ is the space 
of all distributions $u$ defined on $\Omega$ such that 
all partial derivatives of order at most $k$ lie in $L^2(\Omega)$, i.e., 
$\partial ^{\alpha} u \in L^2(\Omega) \ \ 
\forall  \ |\alpha | \leq k$.
This is a Hilbert space with the scalar product
\begin{equation*}
(u,v)_{k,\Omega} = \sum_{|\alpha | \leq k} 
\int_{\Omega} \partial ^{\alpha}u \ \partial^{\alpha}v \ dx, 
\end{equation*}
where $dx$ is the Lebesgue measure and $u,v \in H^k(\Omega)$.
The corresponding norm, denoted by $\|\cdot\|_{k,\Omega}$, is given by
\begin{equation*}
\|u\|_{k,\Omega} = \left(\sum_{|\alpha| \leq k} 
\int_{\Omega} |\partial ^{\alpha}u |^2 \ dx \ \right)^{1/2}.
\end{equation*}
Sobolev spaces $H^s(\Omega)$, for non-integers $s$, 
are defined by the real interpolation method \cite{Ad,Mc,T}. 
The trace spaces $H^s(\partial \Omega)$ can be defined by using 
charts on $\partial \Omega$ and partitions 
of unity subordinated to the covering of $\partial \Omega$. 
If $\Omega$ is a Lipschitz hypograph, then 
there exists a Lipschitz function  
$\gamma: \mathbb{R}^{d-1} \rightarrow \mathbb{R}$ such that 
$\Omega = \left\{ x \in  \mathbb R^{d-1} 
\ / \  x_{d}   <  \gamma (\widehat{x})
\text{ for all } \widehat{x} \in \mathbb R^{d-1}   \right\}$.
This allows to construct Sobolev spaces on the boundary $\partial \Omega$, 
in terms of Sobolev spaces on $\mathbb{R}^{d-1}$ \cite{Mc}.
This is done as follows. For $g\in L^2(\partial \Omega)$, we define
$g_{\gamma}(\widehat{x}) = g(\widehat{x}, \gamma(\widehat{x})) 
\ \mbox{for} \  \widehat{x} \in \mathbb R^{d-1}$, we let
$$
H^s(\partial \Omega) = \left\{ \ g\in L^2(\partial \Omega) \ | 
\ g_{\gamma} \in H^s(\mathbb R^{d-1})  \ \mbox{for} \ 0\leq s\leq 1 \right\}, 
$$
and equip this space with the inner product
$(g,y)_{H^s(\partial \Omega)} 
= (g_{\gamma}, y_{\gamma}) _{s,\mathbb R^{d-1}}$,
where
$$
(u,v)_{s,\mathbb R^{d-1}} = \int_{\mathbb R^{d-1}} (1+|\xi| ^2)^s 
\widehat{u}(\xi) \widehat{v}(\xi) \ d\xi 
$$ 
and $\widehat{u}$ denotes the Fourier transform of $u$.
Recalling that any Lipschitz function is almost everywhere differentiable, 
we know that any Lipschitz hypograph $\Omega$ has a surface measure $\sigma$ 
and an outward unit normal $\nu$ that exists $\sigma$-almost everywhere 
on $\partial \Omega.$ If $\Omega$ is a Lipschitz hypograph, then
$d\sigma(x) = \sqrt{1+\| 
\nabla \gamma (\widehat{x}) \| _{\mathbb R^{d-1}}^2  } d \widehat{x}$ and 
$$
\nu(x) = \frac{  ( - \nabla \gamma (\widehat{x}),1)}  { \sqrt{1+\| 
\nabla \gamma (\widehat{x}) \| _{\mathbb R^{d-1}}^2  } }
$$
for almost every $x\in \partial \Omega$. Suppose now 
that $\Omega$ is a Lipschitz domain. Because 
$\partial \Omega \subset \bigcup _{x\in \partial \Omega}  Q_{\delta , \delta'}(x)$ 
and $\partial \Omega$ is compact, there exist
$x^1, x^2,\ldots,x^n \in \partial \Omega $ such that 
$$
\partial \Omega \subset \bigcup _{j=1}^n Q_{\delta , \delta'}(x^j).
$$ 
It follows that the family $ (W_j) = (Q_{\delta,\delta'}(x^j))$ is a finite 
open cover of $\partial\Omega$, i.e., each $W_j$ is an open subset of 
$\mathbb R^d$ and $\partial \Omega \subseteq  \bigcup _j W_j$. 
Let $(\varphi_j) $ be a partition of unity subordinate to the open cover 
$(W_j)$ of $\partial \Omega$, i.e.,
$\varphi_j \in \mathscr D(W_j)$ and $\sum _j  \varphi_j(x) =1$  for all 
$x\in \partial\Omega$.
The inner product in $H^s(\partial \Omega)$ is then defined by
$$
(u,v)_{s,\partial \Omega} = \sum_j (\varphi_j u, \varphi_j v) _{H^s(\partial \Omega_j)},
$$
where $\Omega_j$ can be transformed to a Lipschitz hypograph by a rigid motion, 
i.e., by a rotation plus a translation, and satisfies
$W_j \cap \Omega = W_j \cap \Omega_j \ \mbox{for each}  \ j$. 
The associated norm will be denoted by
$\| \cdot \|_{s,\partial \Omega}$.
It is interesting to mention that a different choice of 
$(W_j), (\Omega_j)$ and $(\varphi_j)$ would yield the same space 
$H^s(\partial \Omega)$ with an equivalent norm, for $0\leq s \leq 1$. 
For more on the subject we refer the interested reader to \cite{Ad,Dau,Mc}.

The trace operator maps each continuous function $u$ on $\overline{\Omega}$ 
to its restriction onto $\partial\Omega$ 
and may be extended to be a bounded surjective operator, 
denoted by $\Gamma_s$, from $H^s(\Omega)$ to 
$H^{s-\frac{1}{2}}(\partial\Omega)$ for $1/2<s<3/2$  
\cite{Co,Mc}. The range and null space of $\Gamma_s$ 
are given by
$\mathcal R(\Gamma_s)= H^{s-1/2}(\partial \Omega)$ 
and $\mathcal N(\Gamma_s) = H_0^s(\Omega)$, respectively, 
where $H_0^s(\Omega)$ is defined to be the closure in 
$H^s(\Omega)$ of infinitely differentiable functions 
compactly supported in $\Omega$.
For $s=3/2$, this is no longer valid.
For $s>3/2$, the trace operator 
from $H^s(\Omega)$ to $H^1(\partial \Omega)$ is bounded \cite{Co}. 

Let us set $\Gamma=T_1 \Gamma_1$, where $\Gamma_1$ is the trace 
operator from $H^1(\Omega)$ to $H^{1/2}(\partial \Omega)$ and 
$T_1$ is the embedding operator from  $H^{1/2}(\partial \Omega)$  
into $L^2(\partial \Omega)$. According to a classical result 
of Gagliardo \cite{Ga}, we know that
$\mathcal R(\Gamma) = H^{1/2}(\partial \Omega)$. Since $\Gamma_1$ 
is bounded and $T_1$ is compact \cite{N}, 
the trace operator $\Gamma$ from $H^1(\Omega)$ 
to $L^2(\partial \Omega)$ is also compact. 
	
Now, let us induce $H^1(\Omega)$ with the following inner product:
\begin{equation*}
(u,v)_{\partial, \Omega}= \int_{\Omega} \nabla u \nabla v dx 
+ \int_{\partial \Omega} \Gamma u \Gamma v d \sigma 
\quad \forall u,v  \in H^1(\Omega).
\end{equation*}
The associated norm $\|\cdot\|_{\partial, \Omega}$ is given by
\begin{equation*}
\|u\|_{\partial, \Omega}=\left(\|\nabla u\|^2_{0,\Omega}
+\|\Gamma u\|^2_{0,\partial \Omega} \right)^{1/2}
\end{equation*}
and 
$H^{1}(\Omega)$, induced with the inner product $(\cdot,\cdot)_{\partial,\Omega}$, 
is denoted by $H_{\partial}^{1}(\Omega)$. A well-known result
of Ne\v{c}as \cite{N}, asserts that under the condition that $\Omega$ 
is a bounded Lipschitz domain, the norms $\|\cdot\|_{\partial, \Omega}$ 
and $\|\cdot\|_{1, \Omega}$ are equivalent.

The following characterization is useful to prove 
our trace inequalities in Section~\ref{sec:3}.

\begin{lemma}[See Corollary~6.9 of \cite{TCT}] 
\label{lem:1}
Let $\Gamma$ be the trace operator from $H_{\partial}^1(\Omega)$ 
to $L^2(\partial \Omega)$, $\Lambda $ its Moore--Penrose 
inverse, and $\Lambda ^*$ be its adjoint operator. Then, 
for $0\leq s \leq 1$, we have that $H^{s}(\partial \Omega)
= \mathcal H^{s}(\partial \Omega)$ with equivalence of norms, where 
$\mathcal H^{s}(\partial \Omega) 
= \{  (I+\Lambda ^* \Lambda)^{-s}g \ | \ g\in L^2(\partial \Omega)\}$.
\end{lemma}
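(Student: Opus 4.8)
The plan is to recognize both sides of the claimed identity as interpolation scales anchored at $L^2(\partial\Omega)$ and to match them at the endpoints. First I would observe that $I+\Lambda^*\Lambda$ is a positive self-adjoint operator on $L^2(\partial\Omega)$ with $I+\Lambda^*\Lambda\geq I$ (since $\Lambda^*\Lambda\geq 0$ for the closed, densely defined $\Lambda$, whose domain $\mathcal D(\Lambda)=\mathcal R(\Gamma)=H^{1/2}(\partial\Omega)$ is dense in $L^2(\partial\Omega)$), so that its fractional powers are defined by the spectral calculus and $(I+\Lambda^*\Lambda)^{-s}$ is bounded, injective, with dense range for every $s\geq0$. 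Consequently $\mathcal H^{s}(\partial\Omega)=\mathcal R((I+\Lambda^*\Lambda)^{-s})=\mathcal D((I+\Lambda^*\Lambda)^{s})$, normed by $\|(I+\Lambda^*\Lambda)^{s}\,\cdot\,\|_{0,\partial\Omega}$, is exactly the domain-of-fractional-power scale of $I+\Lambda^*\Lambda$, hence a complex interpolation scale with $[\mathcal H^{0},\mathcal H^{1}]_{\theta}=\mathcal H^{\theta}$. On the other side, for $0\leq s\leq1$ the trace spaces form the interpolation scale $[L^2(\partial\Omega),H^{1}(\partial\Omega)]_{\theta}=H^{\theta}(\partial\Omega)$, compatibly with the chart-and-Fourier definition recalled above. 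This reduces the theorem to matching the two scales at the endpoints, since $\mathcal H^{0}(\partial\Omega)=L^2(\partial\Omega)=H^{0}(\partial\Omega)$ is immediate.

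The key intermediate anchor is $s=1/2$, and here I would exploit Gagliardo's identity $\mathcal R(\Gamma)=H^{1/2}(\partial\Omega)$ together with the decomposition lemmas. By Lemma~\ref{lemme:1} one has $\Gamma=(I+\Lambda^*\Lambda)^{-1/2}T_{\Lambda^*}$, whence $\mathcal R(\Gamma)\subseteq\mathcal R((I+\Lambda^*\Lambda)^{-1/2})=\mathcal H^{1/2}(\partial\Omega)$. For the reverse inclusion I would use that, by Lemma~\ref{thm:3.5} and Lemma~\ref{lem:2}, $T_{\Lambda^*}$ is the adjoint of $T_{\Lambda}$ and is an isomorphism between the relevant closed subspaces; this surjectivity onto the orthogonal complement of the kernel is exactly what upgrades the inclusion to the equality $\mathcal R(\Gamma)=\mathcal R((I+\Lambda^*\Lambda)^{-1/2})$. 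The same isomorphism transports the range (equivalently quotient) norm on $\mathcal R(\Gamma)=H^{1/2}(\partial\Omega)$ to the fractional-power norm on $\mathcal H^{1/2}(\partial\Omega)$, yielding $\mathcal H^{1/2}(\partial\Omega)=H^{1/2}(\partial\Omega)$ with equivalence of norms. Interpolating this against the trivial endpoint $s=0$ already gives $\mathcal H^{s}(\partial\Omega)=H^{s}(\partial\Omega)$ for $0\leq s\leq 1/2$.

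It remains to reach the top endpoint $s=1$, that is, to identify $\mathcal H^{1}(\partial\Omega)=\mathcal D(\Lambda^*\Lambda)$ with $H^{1}(\partial\Omega)$, and then to interpolate on $[1/2,1]$. For this I would pin down $\mathcal R((I+\Lambda^*\Lambda)^{-1})$ through item~(3) of Lemma~\ref{eq:lemma2.5Cor2.6}, namely $\Lambda(I+\Lambda^*\Lambda)^{-1}=\Gamma^{*}(I+\Gamma\Gamma^{*})^{-1}$, which transfers the analysis to the compact operator $\Gamma$ and its adjoint, and then invoke Ne\v{c}as' norm equivalence $\|\cdot\|_{\partial,\Omega}\sim\|\cdot\|_{1,\Omega}$ to control the resulting boundary $H^{1}$-norm. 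I expect this endpoint to be the main obstacle: it is where the Lipschitz geometry genuinely enters, it is the reason the statement is capped at $s=1$, and it is considerably more delicate than the $s=1/2$ case, which follows almost formally from Gagliardo and the isomorphism $T_{\Lambda^*}$. A secondary technical point, not to be neglected, is the quantitative upgrading of each range equality to an equivalence of norms and the verification that both families are genuine interpolation scales with compatible normalizations, so that the interpolation theorem may legitimately be applied on each of the subintervals $[0,1/2]$ and $[1/2,1]$.
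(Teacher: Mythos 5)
You should first note that this paper never proves Lemma~\ref{lem:1}: it is imported as Corollary~6.9 of \cite{TCT}, so there is no internal proof to compare your argument against, and the substance you must supply is precisely what that cited corollary contains. On its own terms, the lower half of your outline is sound. The operator $I+\Lambda^*\Lambda$ is self-adjoint and bounded below by $I$, the sets $\mathcal H^{s}(\partial\Omega)=\mathcal R((I+\Lambda^*\Lambda)^{-s})=\mathcal D((I+\Lambda^*\Lambda)^{s})$ do form the fractional-power interpolation scale, the endpoint $s=0$ is trivial, and the anchor $s=1/2$ works essentially as you say: $\mathcal D((I+\Lambda^*\Lambda)^{1/2})=\mathcal D(\Lambda)=\mathcal R(\Gamma)=H^{1/2}(\partial\Omega)$ by Gagliardo, and the factorization $\Gamma=(I+\Lambda^*\Lambda)^{-1/2}T_{\Lambda^*}$ of Lemma~\ref{lemme:1} together with the isomorphism property of Lemmas~\ref{thm:3.5} and~\ref{lem:2} (here $\mathcal N(\Gamma^*)=\{0\}$, so $T_{\Lambda^*}$ is onto $L^2(\partial\Omega)$) upgrades this to an equivalence of norms. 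The boundary interpolation identity $[L^2(\partial\Omega),H^1(\partial\Omega)]_{\theta}=H^{\theta}(\partial\Omega)$ on a Lipschitz boundary also needs a justification or citation (it is true, via the chart definition and a retraction argument, cf.\ \cite{Mc}), but that is a secondary point which you flag yourself.

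The genuine gap is the endpoint $s=1$, and your proposed treatment of it would not work. Unwinding the definitions, $(\Lambda^*\Lambda g,h)_{0,\partial\Omega}=(\Lambda g,\Lambda h)_{\partial,\Omega}=\int_{\Omega}\nabla(\Lambda g)\cdot\nabla(\Lambda h)\,dx+\int_{\partial\Omega}gh\,d\sigma$, so $I+\Lambda^*\Lambda$ is, up to a bounded perturbation, the Dirichlet-to-Neumann operator of $\Omega$, and $\mathcal H^{1}(\partial\Omega)=\mathcal D(\Lambda^*\Lambda)$ is the set of $g\in H^{1/2}(\partial\Omega)$ whose harmonic extension has weak normal derivative in $L^2(\partial\Omega)$. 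Identifying this set with $H^{1}(\partial\Omega)$, for an arbitrary bounded Lipschitz domain, is equivalent to the $L^2$ solvability of the regularity and Neumann problems (Rellich identities, in the spirit of Jerison--Kenig and Verchota): one inclusion says that $H^1(\partial\Omega)$ data produce square-integrable normal derivatives, the other is the converse Rellich-type bound of the tangential gradient by the normal derivative. This is a hard boundary-regularity theorem, where the Lipschitz geometry genuinely enters, and it cannot be extracted from the algebraic identity $\Lambda(I+\Lambda^*\Lambda)^{-1}=\Gamma^*(I+\Gamma\Gamma^*)^{-1}$ of Lemma~\ref{eq:lemma2.5Cor2.6} combined with Ne\v{c}as' equivalence $\|\cdot\|_{\partial,\Omega}\sim\|\cdot\|_{1,\Omega}$, both of which are interior, purely functional-analytic facts insensitive to the boundary regularity questions at stake. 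You correctly anticipate that this endpoint is the main obstacle, but your proof stops exactly there: without supplying (or explicitly citing) the $\mathcal D(\Lambda^*\Lambda)=H^1(\partial\Omega)$ input, the claimed equality for $1/2<s\leq 1$, and hence the lemma as stated, does not follow.
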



\section{Main Results}
\label{sec:3}
  
We begin by proving an important equality that, 
together with the trace inequalities
of Theorem~\ref{thm:1}, will be useful 
to prove our harmonic inequality 
of Theorem~\ref{prop:1}.

\begin{theorem}[The Moore--Penrose inverse equality]
\label{prop:perm} 
Let $\Gamma \in {\mathcal B}( H_{\partial}^1(\Omega),L^2(\partial \Omega))$ 
be the trace operator and $\Lambda \in {\mathcal C}(L^2(\partial \Omega), 
H_{\partial}^1(\Omega))$ its Moore--Penrose inverse. Then, for a real $s$, 
the following equality holds:
$$
T_{\Lambda^*} (I+\Lambda \Lambda^*)^{-s} 
= (I+\Lambda^* \Lambda)^{-s}  T_{\Lambda^*}, 
$$
where $T_{\Lambda^*}=\Lambda^*(I+\Lambda\Lambda^*)^{-1/2}
+\Gamma(I+\Lambda\Lambda^*)^{-1/2}$.
\end{theorem}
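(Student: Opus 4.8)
My plan is to deduce the identity from two elementary commutation relations and then let them propagate through the functional calculus of $\Lambda\Lambda^*$ and $\Lambda^*\Lambda$. Throughout I abbreviate $M=I+\Lambda\Lambda^*$, a nonnegative self-adjoint operator on $H^1_\partial(\Omega)$, and $N=I+\Lambda^*\Lambda$, the corresponding operator on $L^2(\partial\Omega)$; recall $\Lambda=\Gamma^\dagger$, so that $\Lambda^*$ and $\Gamma$ both map $H^1_\partial(\Omega)$ into $L^2(\partial\Omega)$ and $T_{\Lambda^*}=\bigl(\Lambda^*+\Gamma\bigr)M^{-1/2}$. Splitting $T_{\Lambda^*}M^{-s}$ along this sum and using $M^{-1/2}M^{-s}=M^{-1/2-s}$, the whole statement follows once I can move each of $\Lambda^*$ and $\Gamma$ from the left of a power of $M$ to the right of the same power of $N$, i.e. once I establish the two intertwining relations $\Lambda^* M^{-r}=N^{-r}\Lambda^*$ and $\Gamma M^{-r}=N^{-r}\Gamma$ for real $r$. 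Granting these, refactoring $N^{-1/2-s}=N^{-s}N^{-1/2}$ and reassembling the two pieces recovers $N^{-s}T_{\Lambda^*}$ on the right.

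The first relation is the standard commutation of a closed operator with the functions $(1+t)^{-r}$ of $\Lambda\Lambda^*$: at the algebraic level $\Lambda^*(\Lambda\Lambda^*)=(\Lambda^*\Lambda)\Lambda^*$ by associativity, hence $\Lambda^*M=N\Lambda^*$ on $\mathcal D(\Lambda\Lambda^*)$, and the spectral theorem upgrades this to $\Lambda^* M^{-r}=N^{-r}\Lambda^*$. The second relation, for the trace operator $\Gamma$, is where the real work lies and is the step I expect to be the main obstacle, because $\Lambda$ is unbounded and the Moore--Penrose identities hold only on their natural domains. The key computation is the base case $\Gamma M=N\Gamma$, equivalently $\Gamma\Lambda\Lambda^*=\Lambda^*\Lambda\Gamma$ on $\mathcal D(\Lambda\Lambda^*)$. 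Here I would use $\Gamma\Lambda=\Gamma\Gamma^\dagger\subset P_{\overline{\mathcal R(\Gamma)}}=I$ (the projection is the identity because $\mathcal R(\Gamma)=H^{1/2}(\partial\Omega)$ is dense in $L^2(\partial\Omega)$, so $\Gamma^*$ is injective) to get $\Gamma\Lambda\Lambda^*u=\Lambda^*u$, and dually $\Lambda\Gamma=\Gamma^\dagger\Gamma\subset P_{\overline{\mathcal R(\Lambda)}}$ together with $\Lambda^*P_{\overline{\mathcal R(\Lambda)}}=\Lambda^*$ (valid since $\mathcal N(\Lambda^*)=\overline{\mathcal R(\Lambda)}^\perp$) to get $\Lambda^*\Lambda\Gamma u=\Lambda^*u$; the two coincide. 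The care needed is purely in the bookkeeping of domains: for $u\in\mathcal D(\Lambda\Lambda^*)$ one has by definition $\Lambda^*u\in\mathcal D(\Lambda)$, which is exactly what makes $\Gamma\Lambda(\Lambda^*u)$ legitimate, and one checks similarly that $\Gamma u\in\mathcal D(\Lambda^*\Lambda)$.

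Once $\Gamma M=N\Gamma$ holds on $\mathcal D(\Lambda\Lambda^*)$, I pass to resolvents: applying it to $u=M^{-1}v$ gives $\Gamma M^{-1}=N^{-1}\Gamma$ as a genuine identity of bounded operators, and the same passage works for $\Lambda^*$. From here a single functional-calculus argument finishes both relations at once. Writing $R=M^{-1}$ and $S=N^{-1}$, which are bounded, nonnegative and self-adjoint with spectra in $(0,1]$, the relations $\Gamma R=S\Gamma$ and $\Lambda^*R=S\Lambda^*$ give $C\,p(R)=p(S)\,C$ for every polynomial $p$ and $C\in\{\Gamma,\Lambda^*\}$, whence $C f(R)=f(S)C$ for every continuous $f$ on $[0,1]$ by Weierstrass approximation; taking $f(t)=t^{s}$ yields $CM^{-s}=N^{-s}C$ for $s\ge 0$. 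For $s<0$ the operator $M^{-s}$ is unbounded, and I would record the relation at the level of spectral measures: $\Gamma E_R(\cdot)=E_S(\cdot)\Gamma$ (and likewise for $\Lambda^*$) shows that $\Gamma$ and $\Lambda^*$ carry $\mathcal D(M^{-s})$ into $\mathcal D(N^{-s})$ and intertwine the unbounded functional calculus there, so $CM^{-s}=N^{-s}C$ extends to all real $s$. Substituting $r=1/2+s$ and $r=1/2$ into the reduction of the first paragraph then gives the claimed equality $T_{\Lambda^*}M^{-s}=N^{-s}T_{\Lambda^*}$.
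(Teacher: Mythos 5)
Your proposal is correct, but it takes a genuinely different route from the paper's. The paper exploits the compactness of the trace operator $\Gamma$: it splits $H^1_{\partial}(\Omega)$ into $\mathcal N(\Lambda^*)=H_0^1(\Omega)$, where both sides vanish, and its orthogonal complement, which is spanned by the eigenvectors $v_k$ of the compact self-adjoint operator $\Gamma^*\Gamma$, and then checks the identity eigenvector by eigenvector through the explicit relations $(I+\Lambda\Lambda^*)^{-1}v_k=\frac{s_k^2}{1+s_k^2}v_k$, $T_{\Lambda^*}v_k=\sqrt{1+s_k^2}\,z_k$ and $(I+\Lambda^*\Lambda)^{-1}z_k=\frac{s_k^2}{1+s_k^2}z_k$. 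You instead derive, writing $M=I+\Lambda\Lambda^*$ and $N=I+\Lambda^*\Lambda$, the resolvent intertwinings $\Gamma M^{-1}=N^{-1}\Gamma$ and $N^{-1}\Lambda^*\subset\Lambda^*M^{-1}$ directly from the Moore--Penrose relations ($\Gamma\Lambda\subset P_{\overline{\mathcal R(\Gamma)}}=I$ because $\mathcal R(\Gamma)=H^{1/2}(\partial\Omega)$ is dense in $L^2(\partial\Omega)$, and $\Lambda\Gamma\subset P_{\overline{\mathcal R(\Lambda)}}$ with $\Lambda^*P_{\overline{\mathcal R(\Lambda)}}=\Lambda^*$), and then promote them to all real powers by the functional calculus of the bounded self-adjoint operators $M^{-1}$ and $N^{-1}$. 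What each approach buys: yours never uses compactness of $\Gamma$, so it is more general (it only needs dense range of the trace and the Moore--Penrose structure) and it yields the full intertwining of the spectral calculi, not just the powers $(\cdot)^{-s}$; the paper's computation is more concrete and exhibits the eigenvalue formulas explicitly. Two points of bookkeeping you gloss over and should record: for $C=\Lambda^*$ the relations $Cp(R)=p(S)C$ and $Cf(R)=f(S)C$ hold a priori only on $\mathcal D(\Lambda^*)$ and as inclusions rather than equalities (the passage from polynomials to continuous $f$ uses the closedness of $\Lambda^*$), so your final reassembly gives $T_{\Lambda^*}M^{-s}u=N^{-s}T_{\Lambda^*}u$ first only for $u\in\mathcal D(\Lambda^*)$; for $s\ge 0$ both sides are bounded and $\mathcal D(\Lambda^*)$ is dense in $H^1_{\partial}(\Omega)$, so the identity extends by continuity, while for $s<0$ the spectral-measure argument you indicate is indeed what is needed (the paper is equally informal about domains in that range). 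These are routine repairs, consistent with the domain bookkeeping you explicitly flag, and do not affect the validity of your argument.
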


\begin{proof}
From Lemma~\ref{eq:lemma2.5Cor2.6},
\begin{equation*}
\begin{split}
\mathcal N(T_{\Lambda^*} (I+\Lambda \Lambda^*)^{-s}) 
&= \mathcal N(T_{\Lambda^*})\\
&= \mathcal N(\Lambda^*(I+\Lambda \Lambda^*)^{-1/2})\\
&= \mathcal N( \Lambda^*)\\
&= H_0^1(\Omega)\\
&=\mathcal N((I+\Lambda^* \Lambda)^{-s} T_{\Lambda ^*}),
\end{split}
\end{equation*}
and we have  
$$
T_{\Lambda^*} (I+\Lambda \Lambda^*)^{-s} v  = 0
= (I+\Lambda^* \Lambda)^{-s}  T_{\Lambda^*} v
$$
for all $v\in \mathcal N(\Lambda^*)$. Now let us consider the operator 
$\Gamma^* \Gamma:  H_{\partial}^1(\Omega) \longrightarrow  H_{\partial}^1(\Omega)$,
where $\Gamma^*$ is the adjoint of the trace operator $\Gamma$.
Given the compactness of $\Gamma$, the operator $\Gamma^* \Gamma$ 
is compact and self-adjoint. Then there exists a sequence of pairs 
$(s_k,v_k)_{k\geq 1}$ associated to $\Gamma^* \Gamma$ such that 
$$
\Gamma^*\Gamma v_k = s_k^2 v_k.
$$  
To prove the equality on $\mathcal N(\Lambda^*)^{\perp}$, 
we show that   
$$ 
T_{\Lambda ^*}( I+\Lambda \Lambda^*)^{-s} v_k 
=  (I+\Lambda^* \Lambda)^{-s} T_{\Lambda ^*} v_k.
$$
To this end, let $\Gamma v_k = s_k z_k$. It follows that 
$\Gamma^* z_k = s_k v_k$ and, from Lemma~\ref{lemme:1}, 
$\Gamma v_k = (I+\Lambda ^* \Lambda)^{-1/2} T_{\Lambda ^*} v_k$.
On the other hand,
$$ 
T_{\Lambda^*} v_k = \Lambda ^* (I+\Lambda \Lambda^*)^{-1/2} v_k 
+ \Gamma (I+\Lambda \Lambda^*)^{-1/2} v_k. 
$$
By putting $(I+\Lambda \Lambda ^*)^{-1}v_k = w_k$, we have 
$v_k = w_k +\Lambda \Lambda ^* w_k$ and 
$$
\Gamma ^* \Gamma v_k = s_k^2 v_k 
= \Gamma ^* \Gamma w_k +w_k,
$$
which implies that 
\begin{equation}
\label{eq:poft1:n}
\begin{split}
(I+\Gamma^* \Gamma)^{-1} \Gamma^* \Gamma v_k 
&= s_k^2 (I+\Gamma^* \Gamma)^{-1} v_k \\
&= w_k \\
&= \Gamma^* \Gamma (I+\Gamma^* \Gamma)^{-1} v_k.
\end{split}
\end{equation}
Using Lemma~\ref{eq:lemma2.5Cor2.6}, it follows from \eqref{eq:poft1:n} that 
$$
(I+\Gamma^* \Gamma)^{-1} \Gamma^* \Gamma v_k 
= \Gamma^* \Lambda ^* (I+\Lambda \Lambda^*)^{-1} v_k.
$$
This leads, again from Lemma~\ref{eq:lemma2.5Cor2.6}, to 
\begin{equation*}
\begin{split}
(I+ \Lambda \Lambda ^*)^{-1} v_k 
&= s_k^2 (I+ \Gamma^* \Gamma )^{-1} v_k \\
&= s_k^2(v_k -(I+\Lambda \Lambda^*)^{-1} v_k),
\end{split}
\end{equation*}
so that 
$$ 
(1+s_k^2) (I+\Lambda \Lambda^*)^{-1} v_k 
= s_k^2 v_k. 
$$
Thus,
\begin{equation*}
\begin{split}
(I+ \Lambda \Lambda^*)^{-1} v_k 
&= \frac{s_k^2}{1+s_k^2}
v_k \\
&= (1+s_k^2)^{-1} s_k^2 v_k,
\end{split}
\end{equation*}
which implies that  
$$ 
\left(I+ \Lambda \Lambda^*\right)^{-s} v_k 
= \left(s_k^2\left(1+s_k^2\right)^{-1}\right)^{s} v_k.
$$
In particular,
$$ 
(I+ \Lambda \Lambda^*)^{-1/2} v_k 
= s_k(1+s_k^2)^{-1/2} v_k.
$$
Consequently,
\begin{equation*}
\begin{split}
T_{\Lambda^*} v_k 
&=  \Lambda^*(I+\Lambda \Lambda^*)^{-1/2} v_k 
+ \Gamma(I+\Lambda \Lambda ^*)^{-1/2} v_k \\
&= \Lambda ^* \left(  \frac{s_k}{\sqrt{1+s_k^2}} v_k \right)
+ \Gamma \left( \frac{s_k}{\sqrt{1+s_k^2}} v_k \right) \\
&= \frac{1}{\sqrt{1+s_k^2}} z_k + \frac{s_k^2}{\sqrt{1+s_k^2}} z_k \\
&= \sqrt{1+s_k^2} z_k.
\end{split}
\end{equation*}
Therefore,
\begin{equation*}
\begin{split}
T_{\Lambda^*}(I+\Lambda \Lambda^*)^{-s} v_k 
&= \left( \frac{s_k^2} {1+s_k^2}\right)^s 
\left( \frac{1+s_k^2}{\sqrt{1+s_k^2}}\right) z_k \\
&= \left(\frac{s_k^2} {1+s_k^2} \right)^s \sqrt{1+s_k^2} z_k.
\end{split}
\end{equation*}
On the other hand, 
$\Gamma \Gamma^* z_k = \Gamma s_k v_k = s_k^2 z_k$.
By putting  
$(I+\Lambda^*  \Lambda)^{-1} z_k = e_k$, we have 
$$ 
z_k = e_k+\Lambda ^* \Lambda e_k,
$$
which implies that 
$$ 
\Gamma \Gamma^* z_k = s_k^2 z_k = \Gamma \Gamma^* e_k +e_k 
$$
and 
\begin{equation*}
\begin{split}
(I+\Gamma \Gamma^*)^{-1} \Gamma \Gamma^* z_k 
&= s_k^2 (I+\Gamma \Gamma^*)^{-1} z_k \\
&=e_k\\
&=\Gamma \Gamma^* (I+\Gamma \Gamma^*)^{-1} z_k. 
\end{split}
\end{equation*}
Using Lemma~\ref{eq:lemma2.5Cor2.6}, it follows that 
$$
(I+\Gamma \Gamma^*)^{-1} \Gamma \Gamma^* z_k = \Gamma \Lambda (I+\Lambda^*  \Lambda)^{-1} z_k.
$$ 
This leads, again from Lemma~\ref{eq:lemma2.5Cor2.6}, to 
$$
(I+\Lambda^*  \Lambda)^{-1} z_k = s_k ^2(I+\Gamma \Gamma ^*)^{-1} z_k 
= s_k^2 \left(z_k - (I+\Lambda^*  \Lambda)^{-1} z_k \right),
$$
so that 
$$
(1+s_k^2) (I+\Lambda^*  \Lambda)^{-1}z_k = s_k^2 z_k 
$$
and
$$    
(I+\Lambda^*  \Lambda)^{-1} z_k = s_k^2 (1+s_k^2) ^{-1} z_k. 
$$
Consequently, 
$$
\left(I+\Lambda^*  \Lambda\right)^{-s} z_k 
= \left(s_k^2 \left(1+s_k^2\right)^{-1}\right)^s z_k, 
$$
which implies that 
\begin{equation*}
\begin{split}
\left( I+ \Lambda^* \Lambda \right) ^{-s} T_{\Lambda^*} v_k 
&=  \left( I+\Lambda ^* \Lambda \right) ^{-s} \sqrt{1+s_k^2} z_k \\
&= \sqrt{1+s_k^2} \left( \frac{s_k^2}{1+s_k^2} \right) ^s z_k. 
\end{split}
\end{equation*}
Hence, one has 
$ \left( I+ \Lambda^* \Lambda \right) ^{-s} T_{\Lambda^*} v_k  
=  T_{\Lambda^*}\left( I+ \Lambda \Lambda^* \right) ^{-s} v_k$
for all $k\geq 1$ and the proof is complete.
\end{proof}
    
Let us now consider the family of Hilbert spaces  
$$ 
\mathcal H^s(\Omega) = \{  v\in H^s(\Omega) \ / \ \Delta v =0 
\ \text{ in } \ \mathscr{D}' (\Omega) \},
\quad s\geq 0,  
$$ 
that consist of real harmonic functions on the usual Sobolev space 
$H^s(\Omega)$. For $1 < s < 3/2$, we equip $\mathcal H^s(\Omega)$ 
with the following  norm:
$$  
\|u\|_{\mathcal H^s(\Omega)}  
=  \|\Gamma_s u\|_{s-1/2,\partial \Omega}.
$$
      
\begin{theorem}[The trace inequalities]
\label{thm:1}
Let $\Omega\subset \mathbb{R}^d$, $d\geq 2$, be a bounded Lipschitz 
domain with boundary $\partial \Omega$. Consider, for $1 < s<3/2$, 
the trace operators $\Gamma_s$ and $\Gamma$ from $H^s(\Omega)$ 
to $H^{s-1/2}(\partial \Omega)$ and from 
$H_{\partial}^1(\Omega)$ to $L^2(\partial \Omega)$, respectively, 
and let $\Lambda$ be the Moore--Penrose inverse of $\Gamma$.
Then there exist two positive constants $c_1$ and $c_2$ such that 
the inequalities
\begin{equation}
\label{eq:trace:ineq}
c_1 \|\Gamma_s v\| _{s-1/2 ,\partial \Omega}
\leq \|(I+\Lambda^* \Lambda)^{s-1/2}\Gamma \tilde{v}\|_{0,\partial \Omega} 
\leq  c_2 \|\Gamma_s v\|_{s-1/2,\partial \Omega}
\end{equation}
hold for all $v\in \mathcal H^s(\Omega)$, where $\tilde{v}$ 
is the embedding of $v$ in $\mathcal H^1(\Omega)$.
\end{theorem}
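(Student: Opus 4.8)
The plan is to reduce the two-sided estimate \eqref{eq:trace:ineq} to the norm equivalence supplied by Lemma~\ref{lem:1}, once the $L^2$-boundary trace of $\tilde v$ has been identified with $\Gamma_s v$. Put $\sigma := s - 1/2$. Since $1 < s < 3/2$, we have $1/2 < \sigma < 1$, so $\sigma$ lies in the admissible range $[0,1]$ for which Lemma~\ref{lem:1} is valid. Observe also that $\Lambda^*\Lambda$ is a nonnegative self-adjoint operator on $L^2(\partial\Omega)$, so $I+\Lambda^*\Lambda$ has spectrum contained in $[1,\infty)$; hence $(I+\Lambda^*\Lambda)^{-\sigma}$ is bounded and injective, while $(I+\Lambda^*\Lambda)^{\sigma}$ is the (in general unbounded) positive self-adjoint operator whose domain is precisely $\mathcal R\big((I+\Lambda^*\Lambda)^{-\sigma}\big) = \mathcal H^{\sigma}(\partial\Omega)$.

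First I would establish the consistency of the two trace maps. For $v \in \mathcal H^s(\Omega)$ with $s > 1$ we have $H^s(\Omega)\hookrightarrow H^1(\Omega)$, and harmonicity is preserved under this embedding, so $\tilde v \in \mathcal H^1(\Omega)\subset H_\partial^1(\Omega)$. Both $\Gamma_s$ and $\Gamma$ extend the pointwise restriction $u\mapsto u|_{\partial\Omega}$ defined on $\mathcal C^\infty(\overline\Omega)$, and each is continuous into its respective target space. Approximating $v$ by smooth functions in $H^s(\Omega)$, hence also in $H^1(\Omega)$, and passing to the limit in $L^2(\partial\Omega)$, one concludes that $\Gamma\tilde v = \Gamma_s v$ as elements of $L^2(\partial\Omega)$. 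In particular $\Gamma\tilde v \in H^{\sigma}(\partial\Omega)$, and since Lemma~\ref{lem:1} gives $H^{\sigma}(\partial\Omega) = \mathcal H^{\sigma}(\partial\Omega) = \mathcal R\big((I+\Lambda^*\Lambda)^{-\sigma}\big)$, the vector $(I+\Lambda^*\Lambda)^{\sigma}\Gamma\tilde v$ is well defined in $L^2(\partial\Omega)$; this is what makes the middle term of \eqref{eq:trace:ineq} meaningful.

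Next I would invoke Lemma~\ref{lem:1} at the exponent $\sigma$. The space $\mathcal H^{\sigma}(\partial\Omega)$ carries the natural graph norm $\|u\|_{\mathcal H^{\sigma}(\partial\Omega)} = \|(I+\Lambda^*\Lambda)^{\sigma}u\|_{0,\partial\Omega}$, for which $(I+\Lambda^*\Lambda)^{-\sigma}\colon L^2(\partial\Omega)\to\mathcal H^{\sigma}(\partial\Omega)$ is an isometry, and Lemma~\ref{lem:1} asserts that this norm is equivalent to $\|\cdot\|_{\sigma,\partial\Omega}$ on $H^{\sigma}(\partial\Omega)$. Thus there exist constants $c_1,c_2>0$ such that
$$
c_1\,\|u\|_{\sigma,\partial\Omega} \le \|(I+\Lambda^*\Lambda)^{\sigma}u\|_{0,\partial\Omega} \le c_2\,\|u\|_{\sigma,\partial\Omega}
$$
for every $u\in H^{\sigma}(\partial\Omega)$. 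Specialising to $u = \Gamma\tilde v = \Gamma_s v$ and recalling $\sigma = s-1/2$ turns these two inequalities into exactly \eqref{eq:trace:ineq}, for all $v\in\mathcal H^s(\Omega)$.

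The only genuinely delicate point is the trace identification $\Gamma\tilde v = \Gamma_s v$: one must verify that the two trace operators, defined a priori by interpolation and by charts on different Sobolev scales and landing in different target spaces, restrict to the same boundary function on the common domain $\mathcal H^s(\Omega)$. I expect this to follow cleanly from the density of $\mathcal C^\infty(\overline\Omega)$ in $H^s(\Omega)$ together with the continuity of both trace maps into $L^2(\partial\Omega)$, but it is the step carrying the analytic content; once it is secured, the inequalities are simply a restatement of the norm equivalence in Lemma~\ref{lem:1}.
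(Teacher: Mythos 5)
Your proof is correct and takes essentially the same route as the paper: both reduce \eqref{eq:trace:ineq} to the norm equivalence between $\|\cdot\|_{s-1/2,\partial\Omega}$ and the graph norm $g\mapsto\|(I+\Lambda^*\Lambda)^{s-1/2}g\|_{0,\partial\Omega}$ supplied by Lemma~\ref{lem:1}, applied to the boundary trace of $v$. Your explicit verification that $\Gamma\tilde v=\Gamma_s v$ via density of smooth functions is a detail the paper leaves implicit, but it does not alter the argument.
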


\begin{proof}
Assume $1 < s<3/2$ and let $v\in \mathcal H^s(\Omega)$ and 
$\tilde{v}$ be its embedding in $\mathcal H^1(\Omega)$. Clearly, 
$\Gamma_s v \in H^{s-1/2}(\partial \Omega)$ and
$\Gamma \tilde{v} \in  L^2(\partial \Omega)$.  
From Lemma~\ref{lem:1}, it follows that
$$ 
\mathcal H ^{s-1/2}(\partial \Omega)= H^{s-1/2}(\partial \Omega)
$$ 
with equivalence between the norm
$\|\cdot\|_{s-1/2,\partial \Omega}$ 
and the graph norm defined for $g\in L^2(\partial \Omega)$ by
$$ 
g\longmapsto \|(I+\Lambda^* \Lambda)^{s-1/2}g \|_{0,\partial \Omega}.
$$ 
Equivalently, there exist two positive constants $c_1$ and $c_2$ 
such that \eqref{eq:trace:ineq} holds for all 
$v\in \mathcal H^s(\Omega)$.
\end{proof}

As an application of Theorems~\ref{prop:perm} and \ref{thm:1}, 
we prove the harmonic inequality \eqref{eq:Harm:ineq:s:1:3/2}.

\begin{theorem}[The harmonic inequality for $\mathbf{1< s<3/2}$]
\label{prop:1}
Assume $1< s<3/2$. Then, for all $v\in \mathcal H^s(\Omega)$,  
the following inequality holds:
\begin{equation} 
\label{eq:Harm:ineq:s:1:3/2}
\|v\|_{\mathcal H^s(\Omega)} 
\leq 
\|T_ {\Lambda^*}\| \  \|(I+\Lambda \Lambda^*)^{s-1} 
\widetilde{ v} \| _{\partial,  \Omega}, 
\end{equation} 
where 
\begin{equation*}
T_{\Lambda^*}=\Lambda^*(I+\Lambda \Lambda^*)^{-1/2}
+\Gamma(I+\Lambda \Lambda^*)^{-1/2}
\end{equation*}
and $ \widetilde{v}$ is the embedding of $v$ in $\mathcal{H}^1(\Omega)$.
\end{theorem}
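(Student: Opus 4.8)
The plan is to estimate the target quantity $\|v\|_{\mathcal H^s(\Omega)}=\|\Gamma_s v\|_{s-1/2,\partial\Omega}$ by routing it through the mixed expression $(I+\Lambda^*\Lambda)^{s-1/2}\Gamma\tilde v$ that already governs the trace inequalities \eqref{eq:trace:ineq}. The left-hand estimate in \eqref{eq:trace:ineq} bounds $\|\Gamma_s v\|_{s-1/2,\partial\Omega}$ by $\|(I+\Lambda^*\Lambda)^{s-1/2}\Gamma\tilde v\|_{0,\partial\Omega}$, so the whole problem reduces to dominating this last norm by $\|T_{\Lambda^*}\|\,\|(I+\Lambda\Lambda^*)^{s-1}\tilde v\|_{\partial,\Omega}$; its finiteness for $v\in\mathcal H^s(\Omega)$ is in turn secured by the companion upper estimate in \eqref{eq:trace:ineq}.

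The crux is an operator identity. First I would use Lemma~\ref{lemme:1}, applied with $A=\Gamma$ and $B=\Lambda$, to factor the trace operator as $\Gamma=(I+\Lambda^*\Lambda)^{-1/2}T_{\Lambda^*}$. Pre-composing with $(I+\Lambda^*\Lambda)^{s-1/2}$ and adding the resolvent exponents yields
\[
(I+\Lambda^*\Lambda)^{s-1/2}\,\Gamma=(I+\Lambda^*\Lambda)^{s-1}\,T_{\Lambda^*}.
\]
Then I would invoke the Moore--Penrose inverse equality of Theorem~\ref{prop:perm} with the real parameter $1-s$, so that the exponent $-s$ appearing there becomes $s-1$; this gives the commutation relation
\[
(I+\Lambda^*\Lambda)^{s-1}\,T_{\Lambda^*}=T_{\Lambda^*}\,(I+\Lambda\Lambda^*)^{s-1}.
\]
Applying the two displays to $\tilde v$ produces the pivotal identity $(I+\Lambda^*\Lambda)^{s-1/2}\Gamma\tilde v=T_{\Lambda^*}(I+\Lambda\Lambda^*)^{s-1}\tilde v$.

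With this identity the conclusion is one line. Since $T_{\Lambda^*}$ is a bounded operator from $H_{\partial}^{1}(\Omega)$ into $L^2(\partial\Omega)$ by Lemma~\ref{thm:3.5}, taking $\|\cdot\|_{0,\partial\Omega}$-norms gives
\[
\|(I+\Lambda^*\Lambda)^{s-1/2}\Gamma\tilde v\|_{0,\partial\Omega}
=\|T_{\Lambda^*}(I+\Lambda\Lambda^*)^{s-1}\tilde v\|_{0,\partial\Omega}
\leq\|T_{\Lambda^*}\|\,\|(I+\Lambda\Lambda^*)^{s-1}\tilde v\|_{\partial,\Omega}.
\]
Chaining this with the left-hand estimate of \eqref{eq:trace:ineq} (whose equivalence constant we normalize to one) then yields \eqref{eq:Harm:ineq:s:1:3/2}.

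The delicate point is not the operator-norm step but the bookkeeping behind the two displayed identities, since the fractional power $(I+\Lambda\Lambda^*)^{s-1}$ with $s-1\in(0,1/2)$ is unbounded and one must keep track of the Hilbert space on which each factor acts: $\Lambda^*\Lambda$ and its powers act on $L^2(\partial\Omega)$, $\Lambda\Lambda^*$ and its powers act on $H_{\partial}^{1}(\Omega)$, and $T_{\Lambda^*}$ carries $H_{\partial}^{1}(\Omega)$ into $L^2(\partial\Omega)$. Because Theorem~\ref{prop:perm} is established for arbitrary real exponents through the spectral decomposition of the compact self-adjoint operator $\Gamma^*\Gamma$, the commutation is legitimate; the only genuinely substantive requirement is that $\tilde v$ lie in the domain of $(I+\Lambda\Lambda^*)^{s-1}$, which is precisely what membership of $v$ in $\mathcal H^s(\Omega)$ guarantees via Theorem~\ref{thm:1}.
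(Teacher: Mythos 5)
Your argument is correct and follows essentially the same route as the paper's own proof: bound $\|\Gamma_s v\|_{s-1/2,\partial\Omega}$ via the trace inequalities of Theorem~\ref{thm:1}, factor $\Gamma=(I+\Lambda^*\Lambda)^{-1/2}T_{\Lambda^*}$ by Lemma~\ref{lemme:1}, commute the fractional power through $T_{\Lambda^*}$ by Theorem~\ref{prop:perm}, and finish with the boundedness of $T_{\Lambda^*}$ from Lemma~\ref{thm:3.5}. Your explicit normalization of the constant coming from \eqref{eq:trace:ineq} is the same liberty the paper's proof takes implicitly when it passes from the estimate with $c_3$ to the stated inequality.
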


\begin{proof}
Consider $v\in \mathcal H^s(\Omega)$ and  $\widetilde{v}$  
its embedding in $\mathcal{H}^1(\Omega)$. It follows from Theorem~\ref{thm:1} 
that there exist a positive constant $c_3$ such that   
$$
\|v\|_{\mathcal H^s(\Omega)}=\|\Gamma_s v\|_{s-1/2,\partial \Omega}  
\leq 
c_3 \ \|(I+\Lambda^* \Lambda)^{s-1/2} \Gamma \widetilde{ v} \| _{0, \partial \Omega}.
$$
Moreover, from Lemma~\ref{lemme:1}, we have
$$
\Gamma = (I+\Lambda^* \Lambda)^{-1/2} T_{\Lambda^*},
$$
where   
\begin{equation*}
T_{\Lambda^*}=\Lambda^*(I+\Lambda \Lambda^*)^{-1/2}
+\Gamma(I+\Lambda \Lambda^*)^{-1/2}.
\end{equation*}
It follows from 
Theorem~\ref{prop:perm} that
$$
\|v\|_{\mathcal H^s(\Omega)}
= \|(I+\Lambda^* \Lambda)^{s-1} T_{\Lambda^*} 
\widetilde{ v} \|_{0,\partial \Omega} 
=  \|  T_{\Lambda^*} (I+\Lambda \Lambda^*)^{s-1} 
\widetilde{ v} \| _{0,\partial \Omega}.
$$
Lemma~\ref{thm:3.5} asserts that
$T_{\Lambda^*}$ is bounded and
the intended inequality \eqref{eq:Harm:ineq:s:1:3/2}
follows.
\end{proof}

Now consider the embedding operator $E$ from $H^1(\Omega)$ 
into $L^2( \Omega)$ and its adjoint $E^*$, which is 
the solution operator of the following 
Robin problem for the Poisson equation:
\begin{equation*}
\begin{cases}
-\Delta u=f & \text{ in }  \Omega, \\
\partial_{\nu}u+\Gamma u =0 
& \text{ on }  \partial \Omega,
\end{cases}
\end{equation*}
where $f\in L^2(\Omega)$ and $\partial_{\nu}$ denotes 
the normal derivative operator with exterior normal $\nu$.
Let $E_0^*$ be the solution operator of the Dirichlet 
problem for the following Poisson equation:
\begin{equation*}
\begin{cases}
-\Delta u^0=f & \text{ in }  \Omega, \\
\Gamma u^0=0 & \text{ on }  \partial \Omega.
\end{cases}
\end{equation*}
By setting $E_1^*= E^*-E_0^*$ and $u^1 =E_1^*f$, 
it follows that $u^1$ is the solution of the 
Dirichlet problem for the following Laplace equation:
\begin{equation*}
\begin{cases}
-\Delta u^1=0 & \text{ in }  \Omega,\\
\Gamma u^1=\Gamma u & \text{ on }  \partial \Omega.
\end{cases}
\end{equation*}  
Let $0\leq s \leq 1$, $F_1$ be the Moore--Penrose inverse of $E_1$, 
$F_1^*$ be its adjoint operator, and denote 
$$
\mathcal X^s(\Omega) = \left\{ \  (I+F_1^*F_1)^{-s/2} v  \ | \ v \in \mathcal H(\Omega) \ \right\},
$$
where $\mathcal H(\Omega)$ is the Bergman space.
Next we prove the harmonic inequalities for 
the case $s = 1$.

\bigskip

\begin{theorem}[The harmonic inequalities for $\mathbf{s = 1}$]
\label{prop:3}
Let $\Omega\subset \mathbb{R}^d$, $d\geq 2$, 
be a bounded Lipschitz domain. Then, for all $ v\in \mathcal H^1(\Omega)$, 
there exist two positive constants $c_1^{\prime}$ and $c_2^{\prime}$, 
not depending on $v$, such that
\begin{equation} 
\label{eq:Harm:ineqs:s:0:1} 
c_1^{\prime}  \|v\|_{\partial,\Omega} 
\leq  
\|(I+F_1^* F_1)^{1/2} E_1  v\| _{0,\Omega}  
\leq c_2^{\prime} \|v\|_{\partial,\Omega}.
\end{equation} 
\end{theorem}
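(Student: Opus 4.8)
The plan is to reduce the claimed two-sided estimate to the fact that a suitably defined $T$-operator is bounded and bounded below, exactly as the trace inequalities of Theorem~\ref{thm:1} followed from the norm equivalence of Lemma~\ref{lem:1}. The key structural observation is that the pair $(E_1,F_1)$ plays, in the interior/Bergman setting, the same role that $(\Gamma,\Lambda)$ plays on the boundary: $E_1$ is bounded (it is built from the compact embedding $E$ and the bounded solution operators $E_0^*$ and $E^*$), and $F_1=E_1^{\dagger}$ is its Moore--Penrose inverse, so the machinery of Lemmas~\ref{thm:3.5}--\ref{lem:2} is available with $A=E_1$ and $B=F_1$.

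First I would apply the decomposition of Lemma~\ref{lemme:1} to $A=E_1$ and $B=F_1$, which gives $E_1=(I+F_1^*F_1)^{-1/2}T_{F_1^*}$ with $T_{F_1^*}=F_1^*(I+F_1F_1^*)^{-1/2}+E_1(I+F_1F_1^*)^{-1/2}$. Since $(I+F_1^*F_1)^{-1/2}$ is bounded and injective on the codomain of $E_1$, the vector $E_1v=(I+F_1^*F_1)^{-1/2}T_{F_1^*}v$ lies in the range of $(I+F_1^*F_1)^{-1/2}$, hence in the domain of $(I+F_1^*F_1)^{1/2}$, and applying the latter yields the key identity
\begin{equation*}
(I+F_1^*F_1)^{1/2}E_1v=T_{F_1^*}v,\qquad v\in\mathcal H^1(\Omega).
\end{equation*}
Thus the middle term of \eqref{eq:Harm:ineqs:s:0:1} equals $\|T_{F_1^*}v\|_{0,\Omega}$, and the whole theorem reduces to showing $c_1'\|v\|_{\partial,\Omega}\le\|T_{F_1^*}v\|_{0,\Omega}\le c_2'\|v\|_{\partial,\Omega}$ for harmonic $v$.

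For the upper bound I would invoke boundedness: by Lemma~\ref{thm:3.5}, $T_{F_1}$ is bounded and $T_{F_1^*}$ is its adjoint, hence bounded, so one may take $c_2'=\|T_{F_1^*}\|$. The lower bound carries the real content. I would apply Lemma~\ref{lem:2} to the Moore--Penrose pair $(E_1^*,F_1^*)$ — legitimate because the Moore--Penrose inverse is involutive and commutes with adjoints, so $F_1^*=(E_1^*)^{\dagger}$ — to conclude that $T_{F_1^*}$ is an isomorphism from $\mathcal R(F_1)$ onto $\mathcal N(F_1)^{\perp}$, and an isomorphism is bounded below, giving $c_1'>0$ on $\overline{\mathcal R(F_1)}=\mathcal N(F_1^*)^{\perp}$. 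It then remains to match this subspace with $\mathcal H^1(\Omega)$: from the standard Moore--Penrose identities one has $\mathcal N(F_1^*)=\mathcal N((E_1^*)^{\dagger})=\mathcal N(E_1)$ and $\overline{\mathcal R(F_1)}=\mathcal N(E_1)^{\perp}$, while by construction the range of $E_1^*$ consists of solutions of the Laplace equation with prescribed $H^1$-trace, so $\overline{\mathcal R(E_1^*)}=\mathcal N(E_1)^{\perp}$ is precisely the harmonic subspace $\mathcal H^1(\Omega)$ — the orthogonal complement of $H_0^1(\Omega)=\mathcal N(E_1)$ in $H_\partial^1(\Omega)$ for the inner product $(\cdot,\cdot)_{\partial,\Omega}$. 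Combining the identity for $T_{F_1^*}v$ with the two bounds establishes \eqref{eq:Harm:ineqs:s:0:1}.

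The main obstacle I anticipate is the operator-theoretic bookkeeping in the last step: one must track the four spaces $\mathcal N(F_1)$, $\mathcal N(F_1^*)$, $\mathcal R(F_1)$, $\mathcal R(F_1^*)$ and apply Lemma~\ref{lem:2} to the correct pair, since $T_{F_1^*}$ is the adjoint of $T_{F_1}$ and the isomorphism must be read in the right direction; and one must justify $\mathcal N(E_1)=H_0^1(\Omega)$ so that its orthogonal complement is exactly the space $\mathcal H^1(\Omega)$ carrying the norm $\|\cdot\|_{\partial,\Omega}$. Once these identifications are secured, the estimate is immediate from boundedness and the bounded-below property of $T_{F_1^*}$.
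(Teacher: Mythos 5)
Your proposal is correct and takes essentially the same route as the paper: the decomposition $E_1=(I+F_1^*F_1)^{-1/2}T_{F_1^*}$ from Lemma~\ref{lemme:1} to identify the middle term of \eqref{eq:Harm:ineqs:s:0:1} with $\|T_{F_1^*}v\|_{0,\Omega}$, followed by Lemma~\ref{lem:2} (with boundedness of $T_{F_1^*}$ from Lemma~\ref{thm:3.5}) to obtain the two-sided bound. The subspace bookkeeping you make explicit --- $\mathcal N(E_1)=H_0^1(\Omega)$ and $\mathcal R(F_1)=\mathcal N(E_1)^{\perp}=\mathcal H^1(\Omega)$ with respect to $(\cdot,\cdot)_{\partial,\Omega}$ --- is left implicit in the paper's one-line appeal to viewing $T_{F_1^*}$ as an isomorphism from $\mathcal H^1(\Omega)$ into $\mathcal H(\Omega)$.
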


\begin{proof} 
From Lemma~\ref{lemme:1}, the decomposition 
$$
E_1=(I+F_1^* F_1)^{-1/2}T_{F_1^*}
$$
holds, where 
$$
T_{F_1^*}  = F_1^*(I+F_1F_1^*)^{-1/2} + E_1(I+F_1F_1^*)^{-1/2}.
$$ 
Moreover, since $\mathcal{R}(E_1)=\mathcal H^1(\Omega)$, 
it follows that $\mathcal H^1(\Omega)=\mathcal X^1(\Omega)$. 
Now consider the graph norm
$$
\|u\|_{\mathcal X^1(\Omega) }
= \|(I+F_1^* F_1)^{1/2}   u\| _{0,\Omega}
$$
for $u\in \mathcal X^1(\Omega)$. 
It follows that $E_1 v \in \mathcal X^1(\Omega)$ 
for $v\in \mathcal H^1(\Omega)$
and
$$ 
\|(I+F_1^* F_1)^{1/2} E_1 v\| _{0,\Omega} 
= \|T_{F_1^*} v\|_{0,\Omega}. 
$$
In agreement with Lemma~\ref{lem:2}, we
can view $T_{F_1^*} $ as an isomorphism from 
$\mathcal H^1(\Omega)$ into $\mathcal H(\Omega)$, 
and there exist two positive 
constants $c_1^{\prime}$ and $c_2^{\prime}$, 
not depending on $v$, such that
\eqref{eq:Harm:ineqs:s:0:1} holds.
\end{proof}

The harmonic inequalities of Theorem~\ref{prop:3} 
are a useful tool to provide a functional characterization 
of the harmonic Hilbert spaces for the range of values 
$0\leq s\leq 1$. 

\begin{corollary} 
\label{cor:spect:caract}
Assume $0\leq s \leq 1$. Then $\mathcal H^s(\Omega)$ form 
an interpolatory family. Moreover,
$$ 
\mathcal H^s(\Omega) =  \mathcal X^s(\Omega)
$$  
with equivalence of norms.
\end{corollary}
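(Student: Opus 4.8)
The plan is to prove both assertions at once by an interpolation argument pinned down at the two endpoints $s=0$ and $s=1$. First I would identify these endpoints explicitly. For $s=1$, the proof of Theorem~\ref{prop:3} already records that $\mathcal{R}(E_1)=\mathcal{H}^1(\Omega)$, whence $\mathcal{H}^1(\Omega)=\mathcal{X}^1(\Omega)$ with equivalence of norms. For $s=0$ one has $\mathcal{X}^0(\Omega)=\{(I+F_1^*F_1)^{0}v \mid v\in\mathcal{H}(\Omega)\}=\mathcal{H}(\Omega)$, the Bergman space, and this is exactly the space $\mathcal{H}^0(\Omega)=\{v\in L^2(\Omega)\mid \Delta v=0\}$ of square-integrable harmonic functions, equipped with the same $L^2$-norm; so $\mathcal{H}^0(\Omega)=\mathcal{X}^0(\Omega)$. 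With both endpoints matched, it remains to show that each of the families $(\mathcal{X}^s)$ and $(\mathcal{H}^s)$ is an interpolation scale between its $s=0$ and $s=1$ members, and then to interpolate the two endpoint identities.

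For the scale $(\mathcal{X}^s)$ I would set $A=I+F_1^*F_1$. Since $F_1$ is the Moore--Penrose inverse of $E_1$, the operator $F_1^*F_1$ is nonnegative and self-adjoint on the Bergman space $\mathcal{H}(\Omega)$, so $A$ is self-adjoint with $A\geq I$. By definition $\mathcal{X}^s(\Omega)=A^{-s/2}(\mathcal{H}(\Omega))=\mathcal{D}(A^{s/2})$, carrying the graph norm $\|(I+F_1^*F_1)^{s/2}\cdot\|_{0,\Omega}$. The classical theory of fractional powers of a positive self-adjoint operator on a Hilbert space then gives that $(\mathcal{D}(A^{s/2}))_{0\leq s\leq 1}$ is an interpolation scale, i.e.
\[
[\mathcal{X}^0(\Omega),\mathcal{X}^1(\Omega)]_s=\mathcal{X}^s(\Omega),
\qquad 0\leq s\leq 1,
\]
with equivalence of norms, where $[\cdot,\cdot]_s$ denotes the interpolation functor (for Hilbert endpoints the real method with second index $2$ and the complex method coincide). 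This step is essentially bookkeeping once $A$ is recognized as positive self-adjoint.

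The delicate point, and the one I expect to be the main obstacle, is to show that the harmonic spaces $(\mathcal{H}^s)$ themselves form an interpolation scale, since interpolating a closed subspace of a larger scale need not return the expected subspace. Here I would use the solution operator $E_1^*$ introduced just before the statement: it sends $f$ to the harmonic function $u^1$ with $\Gamma u^1=\Gamma u$, and thus supplies a harmonic projection that is bounded on the ambient Sobolev scale $H^s(\Omega)$. Presenting the pair consisting of the inclusion $\mathcal{H}^s(\Omega)\hookrightarrow H^s(\Omega)$ and this harmonic retraction as a uniformly bounded retraction--coretraction pair across $s\in\{0,1\}$, the retraction theorem for interpolation functors yields that harmonicity is preserved under interpolation and that
\[
[\mathcal{H}^0(\Omega),\mathcal{H}^1(\Omega)]_s=\mathcal{H}^s(\Omega),
\qquad 0\leq s\leq 1,
\]
with equivalence of norms; this is precisely the claim that the $\mathcal{H}^s(\Omega)$ form an interpolatory family. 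The work lies in verifying the uniform boundedness of the harmonic retraction on both endpoint spaces and its compatibility across the scale, which rests on the mapping properties of $E_1^*$ and on the harmonic functions forming a complemented subspace consistently with the $H^s$-norms.

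Finally I would combine the three ingredients. Applying the interpolation functor to the endpoint identities $\mathcal{H}^0(\Omega)=\mathcal{X}^0(\Omega)$ and $\mathcal{H}^1(\Omega)=\mathcal{X}^1(\Omega)$, and using that interpolation respects equivalence of the endpoint norms, I obtain for every $0\leq s\leq 1$ that
\[
\mathcal{H}^s(\Omega)=[\mathcal{H}^0(\Omega),\mathcal{H}^1(\Omega)]_s
=[\mathcal{X}^0(\Omega),\mathcal{X}^1(\Omega)]_s=\mathcal{X}^s(\Omega)
\]
with equivalence of norms, which is the second assertion and completes the proof.
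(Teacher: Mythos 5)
Your overall strategy is exactly the paper's: identify the endpoints $\mathcal X^0(\Omega)=\mathcal H(\Omega)=\mathcal H^0(\Omega)$ (by definition) and $\mathcal X^1(\Omega)=\mathcal H^1(\Omega)$ (Theorem~\ref{prop:3}), and then pass to $0<s<1$ by recognizing $\mathcal X^s(\Omega)$ as the fractional-power (interpolation) scale of the positive self-adjoint operator $I+F_1^*F_1$ and invoking that the harmonic spaces $\mathcal H^s(\Omega)$ also form an interpolating family. The paper's own proof does precisely this and, for the last point, simply cites classical results; your endpoint identifications and the treatment of the scale $\mathcal X^s(\Omega)$ are consistent with it.

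The place where you go beyond the paper is also where your argument breaks down: the proposed retraction--coretraction argument for the scale $\mathcal H^s(\Omega)$ built on $E_1^*$. The operator $E_1^*=E^*-E_0^*$ is not a projection onto harmonic functions: it is the difference of the Robin and Dirichlet solution operators, so it does not act as the identity on $\mathcal H^s(\Omega)$ (for harmonic $f$, $E_1^*f$ is the harmonic part of the solution of the Robin problem with source $f$, not $f$ itself), and it is smoothing (it maps $L^2(\Omega)$ into $H^1$-type functions), hence it cannot serve as the retraction in a retraction--coretraction pair for the inclusions $\mathcal H^s(\Omega)\hookrightarrow H^s(\Omega)$. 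Moreover, the natural candidates for a harmonic retraction each fail at one endpoint: the Dirichlet projection $v\mapsto$ (harmonic extension of $\Gamma v$) is bounded on $H^1(\Omega)$ but is not even defined on $H^0(\Omega)=L^2(\Omega)$, while the $L^2$-orthogonal Bergman projection is not known to be bounded on $H^1(\Omega)$ for a general Lipschitz domain; the retraction theorem requires one operator bounded simultaneously at both endpoints. So, as written, your justification that $[\mathcal H^0(\Omega),\mathcal H^1(\Omega)]_s=\mathcal H^s(\Omega)$ does not go through, and this is the substantive content of the step the paper disposes of by citation. Either supply a correct argument for this interpolation identity (e.g., via the known characterizations of harmonic Sobolev spaces on Lipschitz domains used in the authors' companion work), or follow the paper and invoke the classical results directly; the rest of your proof then stands.
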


\begin{proof}
For $s=0$, one has the equality 
$\mathcal X(\Omega)= \mathcal H(\Omega)$
by definition. For $s=1$, Theorem~\ref{prop:3} asserts that 
$\mathcal X^1(\Omega) = \mathcal H^1(\Omega)$ with the equivalence of the norm 
$\|\cdot\|_{\mathcal X^1(\Omega) }$ with the norm on $\mathcal H^1(\Omega)$, 
which is the same as the one on $H_{\partial}^1(\Omega)$. The intended equality 
$\mathcal H^s(\Omega) = \mathcal X^s(\Omega)$ with equivalence of norms,
$0<s<1$, follows from classical results on the theory of positive self-adjoint operators, 
which assert that both $\mathcal X^s(\Omega)$ and $\mathcal H^s(\Omega)$
form an interpolating family for $0<s<1$.
\end{proof}


\begin{acknowledgement}
This research is part of first author's Ph.D. project, 
which is carried out at Moulay Ismail University, Meknes. 
It was essentially finished during a visit of Touhami 
to the Department of Mathematics of University of Aveiro, 
Portugal, November 2018. The hospitality of the host 
institution and the financial support of Moulay Ismail 
University, Morocco, and CIDMA, Portugal, are here 
gratefully acknowledged. Torres was partially supported 
by the Portuguese Foundation for Science and Technology (FCT) 
through CIDMA, project UID/MAT/04106/2019.
\end{acknowledgement}



\end{document}